\numberwithin{equation}{section}
\newtheorem{theorem}{Theorem}[section]
\newtheorem{definition}[theorem]{Definition}
\newtheorem{proposition}[theorem]{Proposition}
\newtheorem{lemma}[theorem]{Lemma}
\newtheorem{corollary}[theorem]{Corollary}
\theoremstyle{definition}
\newtheorem{example}[theorem]{Example}
\newtheorem{remark}[theorem]{Remark}
\newcommand{\cG}{\mbox{${\cal G}$}}
\newcommand{\cO}{\mbox{${\cal O}$}}
\newcommand{\cU}{\mbox{${\cal U}$}}
\newcommand{\cW}{\mbox{${\cal W}$}}
\title{\textbf{$d$-Hermite rings and skew $PBW$ extensions}}
\author{Oswaldo Lezama\\
\texttt{jolezamas@unal.edu.co}
\\Claudia Gallego
\\ Seminario de Álgebra Constructiva - SAC$^2$\\ Departamento de Matemáticas\\ Universidad Nacional de
Colombia, Sede Bogot\'a}
\date{}
\begin{document}
\maketitle
\begin{abstract}
\noindent Many rings and algebras arising in quantum mechanics can be interpreted as skew $PBW$
(Poincaré-Birkhoff-Witt) extensions. Indeed, Weyl algebras, enveloping algebras of
finite-dimensional Lie algebras (and its quantization), Artamonov quantum polynomials, diffusion
algebras, Manin algebra of quantum matrices, among many others, are examples of skew $PBW$
extensions. In this short paper we study the $d$-Hermite condition about stably free modules for
skew $PBW$ extensions. For this purpose, we estimate the stable rank of these non-commutative
rings. In addition, and close related with these questions, we will prove the Kronecker's theorem
about the radical of finitely generated ideals for some particular types of skew $BW$ extensions.
\bigskip

\noindent \textit{Key words and phrases.} $d$-Hermite rings, skew $PBW$ extensions, stable range
theorem, Kronecker's theorem.

\bigskip

\noindent 2010 \textit{Mathematics Subject Classification.} Primary: 16D40. Secondary: 15A21.
\end{abstract}
\section{Introduction}

Skew $PBW$ extensions are a class of non-commutative rings and algebras of polynomial type that
generalize classical $PBW$ extensions and include many important types of quantum algebras. Skew
$PBW$ extensions were defined in \cite{LezamaGallego}, and some homological properties of them were
investigated in \cite{lezamareyes1}. In particular, if the ring of coefficients satisfies some
suitable conditions, then the finitely generated projective modules over a skew $PBW$ extension are
stably free. However, it is easy to present examples of skew $PBW$ extensions that are not Hermite
rings (a ring $S$ is \textit{Hermite} if every stably free module is free). In fact, if $K$ is a
division ring, then $S:=K[x,y]$ is a trivial skew $PBW$ extension that has a module $M$ such that
$M\oplus S\cong S^2$, but $M$ is not free, i.e., $S$ is not Hermite (\cite{Lam}, p. 36). Another
example occurs in Weyl algebras: Let $K$ be a field, with ${\rm char}(K)=0$, the Weyl algebra
$A_1(K)=K[t][x;\frac{d}{dt}]$ is a skew $PBW$ extension but is not Hermite since there exist stably
free modules of rank $1$ over $A_n(K)$ that are not free (\cite{Cohn1}, Corollary 1.5.3; see also
\cite{McConnell}, Example 11.1.4). In this paper we will study a weaker condition than the Hermite
property for skew $PBW$ extensions: the $d$-Hermite condition.

Some notations and well known elementary properties of linear algebra for left modules are needed
in the rest of the paper. The reader can see also \cite{LezamaGallego1}. If nothing contrary is
assumed, all modules in this paper are left modules. Let $S$ be a ring, $S$ satisfies the
\textit{rank condition} {\rm(}$\mathcal{RC}${\rm)} if for any integers $r,s\geq 1$, given an
epimorphism $S^r\xrightarrow{f} S^s$, then $r\geq s$. $S$ is an $\mathcal{IBN}$ ring
{\rm(}\textit{invariant basis number}{\rm)} if for any integers $r,s\geq 1$, $S^r\cong S^s$ if and
only if $r=s$. It is well known that $\mathcal{RC}$ implies $\mathcal{IBN}$. From now on we will
assume that all rings considered in the present paper are $\mathcal{RC}$. Let $M$ be an $S$-module
and $t\geq 0$ an integer. $M$ is \textit{stably free of rank $t\geq 0$} if there exist an integer
$s\geq 0$ such that $S^{s+t}\cong S^s\oplus M$. It says that a ring $S$ is \textit{Hermite},
property denoted by $\mathcal{H}$, if every stably free $S$-module is free. Let $F$ be a matrix
over $S$ of size $r\times s$. Then
\begin{enumerate}
\item[\rm (i)]For $r\geq s$, $F$ is \textit{unimodular} if and only if $F$ has a left inverse.
\item[\rm (ii)]For $s\geq r$, $F$ is \textit{unimodular} if and only if $F$ has a right inverse.
\end{enumerate}
The set of unimodular column matrices of size $r\times 1$ is denoted by $Um_{c}(r,S)$. $Um_r(s,S)$
is the set of unimodular row matrices of size $1\times s$. The unimodular vector
$\textbf{\textit{v}}:=\begin{bmatrix}v_1 & \dots & v_r\end{bmatrix}^T\in Um_c(r,S)$ is called
\textit{stable} $($\textit{reducible}$)$ if there exists $a_1,\dots,a_{r-1}\in S$ such that
$\textbf{\textit{v}}':=\begin{bmatrix}v_1+a_1v_r & \dots & v_{r-1}+a_{r-1}v_r\end{bmatrix}^T$ is
unimodular. It says that the \textit{left stable rank} of $S$ is $d\geq 1$, denoted ${\rm
sr}(S)=d$, if $d$ is the least positive integer such that every unimodular column vector of length
$d+1$ is stable. It says that ${\rm sr}(S)=\infty$ if for every $d\geq 1$ there exits a non stable
unimodular column vector of length $d+1$. In a similar way is defined the right stable rank and it
is well known that the stable rank condition is left-right symmetric.

We conclude this preliminary section with a result due Stafford about stably free modules. A matrix
constructive proof can be found in \cite{Quadrat}, and also in \cite{LezamaGallego1}.

\begin{proposition}\label{7.3.6}
Let $S$ be a ring. Then any stably free $S$-module $M$ with ${\rm rank}(M)\geq \text{\rm sr}(S)$ is
free with dimension equals to ${\rm rank}(M)$.
\end{proposition}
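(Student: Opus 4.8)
Following the matrix-constructive route referenced above, the plan is to induct on the ``stabilization degree'' $s$ in a fixed isomorphism $S^{s+t}\cong S^{s}\oplus M$, where $t:={\rm rank}(M)\geq {\rm sr}(S)=:d$. First I would translate the hypothesis into matrices: the inclusion of $S^{s}$ as a direct summand of $S^{s+t}$ is realized by a matrix $A$ of size $(s+t)\times s$ admitting a left inverse $B$ (so $BA=I_{s}$), and then $M\cong S^{s+t}/AS^{s}={\rm coker}(A)$. If $s=0$ this already says $M\cong S^{t}$; since $S$ is $\mathcal{RC}$, hence $\mathcal{IBN}$, this free module has dimension exactly $t={\rm rank}(M)$, which is the base case.

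For the inductive step suppose $s\geq 1$. The first column $\mathbf{a}$ of $A$ is a unimodular column of length $s+t$, because $BA=I_{s}$ forces the first row of $B$ to be a left inverse of $\mathbf{a}$. As $s\geq 1$ and $t\geq d$, we have $s+t\geq d+1$, so $\mathbf{a}$ is covered by the stable range hypothesis: it is reducible, and an explicit short string of elementary row operations then carries it to $e_{1}=(1,0,\dots,0)^{T}$; equivalently, there is an invertible matrix $P$ with $P\mathbf{a}=e_{1}$. Left multiplication by $P$ is merely a change of basis of $S^{s+t}$ and does not change ${\rm coker}(A)\cong M$, and now $PA$ has first column $e_{1}$; subtracting suitable left multiples of this column from the remaining ones, i.e.\ multiplying on the right by an invertible $Q$ (which alters neither the column span of $PA$ nor therefore its cokernel), produces
\[
PAQ=\begin{bmatrix} 1 & 0 \\ 0 & A' \end{bmatrix}
\]
with $A'$ of size $(s+t-1)\times(s-1)$. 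A block computation with the left inverse $Q^{-1}BP^{-1}$ of $PAQ$ shows $A'$ retains a left inverse, and
\[
M\cong {\rm coker}(A)\cong {\rm coker}(PAQ)\cong {\rm coker}(A').
\]
Hence $M\oplus S^{s-1}\cong S^{(s-1)+t}$, i.e.\ $M$ is stably free of rank $t$ with stabilization degree $s-1$; since $t\geq d$ is unchanged, the induction hypothesis applies and gives $M\cong S^{t}$ of dimension $t={\rm rank}(M)$.

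The routine parts are the dictionary between direct-summand decompositions and matrices with one-sided inverses, and the block bookkeeping at the end, which is a bit fussier than in the commutative case because one must keep track of left versus right inverses and of the side on which scalars act. The step carrying the real content --- and the one I expect to be the main obstacle --- is the passage from a unimodular column of length $\geq {\rm sr}(S)+1$ to $e_{1}$: this is precisely where the stable range condition is used, and it requires first upgrading the definition (which only mentions columns of length $d+1$) to unimodular columns of every length $\geq d+1$, and then converting ``reducible'' into ``equivalent to $e_{1}$'' by a careful sequence of elementary operations over the noncommutative ring $S$ --- reduce the last coordinate to $0$ using a left inverse of the shortened unimodular column, rebuild a $1$ in that slot, and use it to clear the remaining coordinates.
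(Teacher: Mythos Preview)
Your outline is correct and is precisely the matrix-constructive argument the paper has in mind: the paper does not give a self-contained proof of this proposition but simply refers to \cite{Quadrat} and \cite{LezamaGallego1} for exactly the induction-on-$s$ reduction you describe (unimodular column of length $s+t\geq{\rm sr}(S)+1$, reduce it to $e_1$ via the stable range condition, peel off a $1\times 1$ block, decrease $s$). Your identification of the two nontrivial points --- upgrading reducibility from length $d+1$ to all lengths $\geq d+1$, and turning ``reducible'' into ``$GL$-equivalent to $e_1$'' over a noncommutative $S$ --- is accurate, and both are handled in the cited references just as you sketch.
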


\section{Skew $PBW$ extensions}\label{definitionexamplesspbw}

In this section we recall the definition of skew $PBW$ (Poincaré-Birkhoff-Witt) extensions defined
firstly in \cite{LezamaGallego}, and we will review also some basic properties about the polynomial
interpretation of this kind of non-commutative rings. Two particular subclasses of these extensions
are recalled also.
\begin{definition}\label{gpbwextension}
Let $R$ and $A$ be rings. We say that $A$ is a \textit{skew $PBW$ extension of $R$} $($also called
a $\sigma-PBW$ extension of $R$$)$ if the following conditions hold:
\begin{enumerate}
\item[\rm (i)]$R\subseteq A$.
\item[\rm (ii)]There exist finite elements $x_1,\dots ,x_n\in A$ such $A$ is a left $R$-free module with basis
\begin{center}
${\rm Mon}(A):= \{x^{\alpha}=x_1^{\alpha_1}\cdots x_n^{\alpha_n}\mid \alpha=(\alpha_1,\dots
,\alpha_n)\in \mathbb{N}^n\}$.
\end{center}
\item[\rm (iii)]For every $1\leq i\leq n$ and $r\in R-\{0\}$ there exists $c_{i,r}\in R-\{0\}$ such that
\begin{equation}\label{sigmadefinicion1}
x_ir-c_{i,r}x_i\in R.
\end{equation}
\item[\rm (iv)]For every $1\leq i,j\leq n$ there exists $c_{i,j}\in R-\{0\}$ such that
\begin{equation}\label{sigmadefinicion2}
x_jx_i-c_{i,j}x_ix_j\in R+Rx_1+\cdots +Rx_n.
\end{equation}
Under these conditions we will write $A:=\sigma(R)\langle x_1,\dots ,x_n\rangle$.
\end{enumerate}
\end{definition}
The following proposition justifies the notation and the alternative name given for the skew $PBW$
extensions.
\begin{proposition}\label{sigmadefinition}
Let $A$ be a skew $PBW$ extension of $R$. Then, for every $1\leq i\leq n$, there exists an
injective ring endomorphism $\sigma_i:R\rightarrow R$ and a $\sigma_i$-derivation
$\delta_i:R\rightarrow R$ such that
\begin{center}
$x_ir=\sigma_i(r)x_i+\delta_i(r)$,
\end{center}
for each $r\in R$.
\end{proposition}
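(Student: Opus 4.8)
The plan is to extract the maps $\sigma_i$ and $\delta_i$ directly from condition (iii) of Definition \ref{gpbwextension} and then verify their algebraic properties using the uniqueness of representations in the free left $R$-module $A$ with basis ${\rm Mon}(A)$. First I would fix $i$ and, for $r\in R-\{0\}$, use (\ref{sigmadefinicion1}) to write $x_ir = c_{i,r}x_i + d_{i,r}$ with $c_{i,r}\in R-\{0\}$ and $d_{i,r}\in R$; for $r=0$ set $c_{i,0}:=0$ and $d_{i,0}:=0$ (consistent since $x_i0=0$). Then define $\sigma_i(r):=c_{i,r}$ and $\delta_i(r):=d_{i,r}$, so that $x_ir = \sigma_i(r)x_i+\delta_i(r)$ for all $r\in R$. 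The key point making this well defined is that $1,x_i\in {\rm Mon}(A)$ are part of a left $R$-basis, so the expression of $x_ir$ as an $R$-combination of monomials is unique; in particular the coefficients of $x_i$ and of $1$ are uniquely determined, which also forces $c_{i,r}$ to be the unique element satisfying (\ref{sigmadefinicion1}).

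Next I would check that $\sigma_i$ is an additive, multiplicative ring endomorphism and that $\delta_i$ is a $\sigma_i$-derivation, in each case by computing $x_i(r+r')$ and $x_i(rr')$ in two ways and comparing coefficients. For additivity: $x_i(r+r') = x_ir + x_ir' = (\sigma_i(r)+\sigma_i(r'))x_i + (\delta_i(r)+\delta_i(r'))$, and comparison with $x_i(r+r') = \sigma_i(r+r')x_i + \delta_i(r+r')$ via uniqueness gives $\sigma_i(r+r')=\sigma_i(r)+\sigma_i(r')$ and $\delta_i(r+r')=\delta_i(r)+\delta_i(r')$. For multiplicativity: $x_i(rr') = (x_ir)r' = (\sigma_i(r)x_i+\delta_i(r))r' = \sigma_i(r)(x_ir') + \delta_i(r)r' = \sigma_i(r)\sigma_i(r')x_i + \sigma_i(r)\delta_i(r') + \delta_i(r)r'$; comparing with $x_i(rr') = \sigma_i(rr')x_i + \delta_i(rr')$ yields $\sigma_i(rr')=\sigma_i(r)\sigma_i(r')$ and the twisted Leibniz rule $\delta_i(rr') = \sigma_i(r)\delta_i(r') + \delta_i(r)r'$, which is exactly the defining identity of a $\sigma_i$-derivation. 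Taking $r=r'=1$ in the product formula gives $\sigma_i(1)=\sigma_i(1)^2$ and, since $x_i\cdot 1 = x_i$ forces $\sigma_i(1)=1$, we get that $\sigma_i$ preserves the identity; one also reads off $\delta_i(1)=0$.

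Finally I would establish injectivity of $\sigma_i$. Suppose $\sigma_i(r)=0$ for some $r\in R$; then $x_ir = \delta_i(r)\in R$, so $x_ir$ has zero coefficient on $x_i$ in the basis ${\rm Mon}(A)$. If $r\neq 0$, condition (iii) guarantees the existence of $c_{i,r}\in R-\{0\}$ with $x_ir - c_{i,r}x_i\in R$, and by the uniqueness of the coefficient of $x_i$ we must have $c_{i,r} = \sigma_i(r) = 0$, contradicting $c_{i,r}\neq 0$. Hence $r=0$ and $\sigma_i$ is injective. The only genuinely delicate point in the argument is being careful about the case $r=0$ when invoking (iii) (the hypothesis is only stated for $r\in R-\{0\}$), and making explicit that the freeness in (ii) is what licenses every "compare coefficients" step; once that bookkeeping is in place the proof is a routine unwinding of the axioms.
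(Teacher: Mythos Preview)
Your argument is correct and is exactly the standard verification one expects: define $\sigma_i$ and $\delta_i$ as the $x_i$- and $1$-coefficients of $x_ir$ in the basis ${\rm Mon}(A)$, then use the uniqueness afforded by freeness (condition (ii)) to check the endomorphism and $\sigma_i$-derivation identities, and finally read injectivity of $\sigma_i$ off the nonvanishing clause $c_{i,r}\in R-\{0\}$ in condition (iii). The paper itself does not supply a proof here; it simply refers to \cite{LezamaGallego}, Proposition~3, and your write-up is precisely the routine unwinding of the axioms that lies behind that citation. The only cosmetic remark is that your detour through $\sigma_i(1)=\sigma_i(1)^2$ is unnecessary: as you note immediately afterward, $x_i\cdot 1=x_i=1\cdot x_i+0$ already gives $\sigma_i(1)=1$ and $\delta_i(1)=0$ directly by coefficient comparison.
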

\begin{proof}
See \cite{LezamaGallego}, Proposition 3.
\end{proof}

A particular case of skew $PBW$ extension is when all derivations $\delta_i$ are zero. Another
interesting case is when all $\sigma_i$ are bijective and the constants $c_{ij}$ are invertible. We
recall the following definition (cf. \cite{LezamaGallego}).
\begin{definition}\label{sigmapbwderivationtype}
Let $A$ be a skew $PBW$ extension.
\begin{enumerate}
\item[\rm (a)]
$A$ is quasi-commutative if the conditions {\rm(}iii{\rm)} and {\rm(}iv{\rm)} in Definition
\ref{gpbwextension} are replaced by
\begin{enumerate}
\item[\rm (iii')]For every $1\leq i\leq n$ and $r\in R-\{0\}$ there exists $c_{i,r}\in R-\{0\}$ such that
\begin{equation}
x_ir=c_{i,r}x_i.
\end{equation}
\item[\rm (iv')]For every $1\leq i,j\leq n$ there exists $c_{i,j}\in R-\{0\}$ such that
\begin{equation}
x_jx_i=c_{i,j}x_ix_j.
\end{equation}
\end{enumerate}
\item[\rm (b)]$A$ is bijective if $\sigma_i$ is bijective for
every $1\leq i\leq n$ and $c_{i,j}$ is invertible for any $1\leq i<j\leq n$.
\end{enumerate}
\end{definition}

Some useful properties of skew $PBW$ extensions that we will use later are the following.
\begin{proposition}\label{1.1.10}
Let A be a skew PBW extension of a ring R. If R is a domain, then A is a domain.
\end{proposition}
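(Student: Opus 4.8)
The plan is to exploit the polynomial calculus that the $R$-basis $\mathrm{Mon}(A)$ confers on $A$. Fix a monomial order $\preceq$ on $\mathbb{N}^n$, say the degree-lexicographical order: a total order with $0\preceq\alpha$ for all $\alpha$, with $\alpha\preceq\beta\Rightarrow\alpha+\gamma\preceq\beta+\gamma$, and which strictly decreases whenever the total degree $|\alpha|=\alpha_1+\cdots+\alpha_n$ strictly decreases. For nonzero $f=\sum_\alpha r_\alpha x^\alpha\in A$, let $\exp(f)$ be the $\preceq$-largest $\alpha$ with $r_\alpha\neq0$ and let $\mathrm{lc}(f):=r_{\exp(f)}$, a nonzero element of $R$.

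The core of the argument is a leading-term formula for products of monomials. I would first prove, by induction on $|\alpha|+|\beta|$ using conditions (iii) and (iv) of Definition \ref{gpbwextension} together with Proposition \ref{sigmadefinition}, that for every $r\in R$ and $\alpha,\beta\in\mathbb{N}^n$
\begin{equation*}
x^\alpha\, r\, x^\beta \;=\; \sigma^\alpha(r)\,c_{\alpha,\beta}\,x^{\alpha+\beta}\;+\;p_{\alpha,\beta,r},
\end{equation*}
where $\sigma^\alpha:=\sigma_1^{\alpha_1}\circ\cdots\circ\sigma_n^{\alpha_n}$, the element $c_{\alpha,\beta}\in R$ is a product of the structure constants $c_{i,j}$ (so it does not depend on $r$), and $p_{\alpha,\beta,r}$ is an $R$-linear combination of monomials $x^\gamma$ with $\gamma\prec\alpha+\beta$. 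Indeed, each elementary move in the induction — passing $r$ across an $x_i$ via $x_ir=\sigma_i(r)x_i+\delta_i(r)$, or reordering $x_jx_i=c_{i,j}x_ix_j+h$ with $h\in R+Rx_1+\cdots+Rx_n$ — either applies some $\sigma_i$ to the current coefficient or multiplies it by some $c_{i,j}$, while the remainder it produces has strictly smaller total degree and hence strictly smaller exponent; thus the leading coefficient accumulates exactly as $\sigma^\alpha(r)c_{\alpha,\beta}$ and everything else is pushed below $\alpha+\beta$. (This is essentially the product formula of the polynomial interpretation of skew $PBW$ extensions from \cite{LezamaGallego}.) Since $R$ is a domain, each $c_{i,j}\neq0$ by Definition \ref{gpbwextension}(iv), and each $\sigma_i$ is injective by Proposition \ref{sigmadefinition}, so $c_{\alpha,\beta}\neq0$, and $\sigma^\alpha(r)\neq0$ whenever $r\neq0$.

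Granting the formula, the proposition follows at once. Let $f,g\in A$ be nonzero, put $a:=\exp(f)$, $b:=\exp(g)$, and expand $fg$ by bilinearity. A pair of exponents $\alpha$ occurring in $f$ and $\beta$ occurring in $g$ satisfies $\alpha\preceq a$, $\beta\preceq b$; the corresponding product of terms is, by the formula, an $R$-combination of monomials $x^\gamma$ with $\gamma\preceq\alpha+\beta$, and by strict monotonicity of $\preceq$ one has $\alpha+\beta\prec a+b$ unless $\alpha=a$ and $\beta=b$. Hence no monomial of $fg$ exceeds $x^{a+b}$, and the coefficient of $x^{a+b}$ in $fg$ is precisely $\mathrm{lc}(f)\,\sigma^{a}(\mathrm{lc}(g))\,c_{a,b}$, a product of three nonzero elements of the domain $R$, hence nonzero. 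Therefore $\exp(fg)=a+b$ and $fg\neq0$; since $1\neq0$ in $A$ (as $R$ is a domain), $A$ is a domain.

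I expect the only real work to be the inductive verification of the monomial product formula: keeping track of which correction terms drop the exponent, and checking that the accumulated scalar $c_{\alpha,\beta}$ stays nonzero — which is exactly where the hypotheses that $R$ is a domain and that the $\sigma_i$ are injective enter. The remaining deduction is the standard leading-term bookkeeping and is routine once that formula is in hand.
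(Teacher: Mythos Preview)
Your argument is correct and is the standard leading-term proof; the paper itself gives no argument here but simply cites \cite{lezamareyes1}, where essentially this same proof appears. One small point worth tightening: the element $c_{\alpha,\beta}$ is not literally a product of the constants $c_{i,j}$ but rather a product of images $\sigma^{\gamma}(c_{i,j})$ under various compositions of the $\sigma_i$ (as your own description of the inductive step already indicates); this does not affect the conclusion, since the $\sigma_i$ are injective and $R$ is a domain, so $c_{\alpha,\beta}\neq 0$ regardless.
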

\begin{proof}
See \cite{lezamareyes1}.
\end{proof}

\begin{proposition}\label{1.3.3}
Let $A$ be a quasi-commutative skew $PBW$ extension of a ring $R$. Then,
\begin{enumerate}
\item[\rm (i)] $A$ is isomorphic to an iterated skew polynomial ring of
endomorphism type, i.e.,
\begin{center}
$A\cong R[z_1;\theta_1]\cdots [z_{n};\theta_n]$.
\end{center}
\item[\rm (ii)] If $A$ is bijective, then each
endomorphism $\theta_i$ is bijective, $1\leq i\leq n$.
\end{enumerate}
\end{proposition}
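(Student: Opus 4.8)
The plan is to prove (i) by induction on $n$, building the iterated skew polynomial ring one variable at a time, and then to deduce (ii) as an easy consequence of the bijectivity hypothesis together with the structure uncovered in (i). The key observation is that for a quasi-commutative skew $PBW$ extension the defining relations (iii$'$) and (iv$'$) say precisely that $x_i r = c_{i,r} x_i$ with $c_{i,r}\in R$, and $x_j x_i = c_{i,j} x_i x_j$ with $c_{i,j}\in R$; the first of these, via Proposition \ref{sigmadefinition}, means that $x_i r = \sigma_i(r) x_i$ (so the $\sigma_i$-derivation $\delta_i$ is identically zero, since $\delta_i(r) = x_i r - \sigma_i(r)x_i = 0$), and hence $c_{i,r} = \sigma_i(r)$.

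First I would handle the case $n=1$: here $A$ is a left $R$-free module with basis $\{x_1^{\alpha}\mid \alpha\in\mathbb{N}\}$ and the only relation is $x_1 r = \sigma_1(r) x_1$ for all $r\in R$, where $\sigma_1$ is an injective ring endomorphism of $R$ by Proposition \ref{sigmadefinition}. This is exactly the presentation of the skew polynomial ring $R[z_1;\sigma_1]$, so the $R$-linear map sending $z_1^{\alpha}\mapsto x_1^{\alpha}$ is a ring isomorphism $R[z_1;\sigma_1]\xrightarrow{\sim} A$; set $\theta_1:=\sigma_1$. For the inductive step, let $A_0:=R\langle x_1,\dots,x_{n-1}\rangle$ be the subring generated by $R$ and $x_1,\dots,x_{n-1}$; using relation (iv$'$) one checks $A_0$ is itself a quasi-commutative skew $PBW$ extension $\sigma(R)\langle x_1,\dots,x_{n-1}\rangle$, so by the induction hypothesis $A_0\cong R[z_1;\theta_1]\cdots[z_{n-1};\theta_{n-1}]$. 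Now I would show $A$ is a left free $A_0$-module with basis $\{x_n^{\alpha_n}\mid \alpha_n\in\mathbb{N}\}$ (this follows from the PBW basis ${\rm Mon}(A)$ by collecting the power of $x_n$ on the right), and that conjugation by $x_n$ gives a ring endomorphism $\theta_n:A_0\to A_0$: indeed $x_n r = \sigma_n(r) x_n$ and $x_n x_i = c_{i,n} x_i x_n$ show that $x_n a \in A_0 x_n$ for every $a\in A_0$, defining $\theta_n$ by $x_n a = \theta_n(a) x_n$; one verifies $\theta_n$ is an injective ring endomorphism (using that $A$ is a domain when $R$ is, or more elementarily that left multiplication by $x_n$ is injective on the free module $A$). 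Then the $A_0$-linear map $z_n^{\alpha_n}\mapsto x_n^{\alpha_n}$ gives $A\cong A_0[z_n;\theta_n]\cong R[z_1;\theta_1]\cdots[z_n;\theta_n]$, completing the induction.

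For part (ii), assume $A$ is bijective, so each $\sigma_i$ is bijective and each $c_{i,j}$ ($i<j$) is invertible in $R$. I must check that the endomorphism $\theta_i$ constructed above is bijective. For $i=1$, $\theta_1=\sigma_1$ is bijective by hypothesis. For $i\geq 2$, $\theta_i$ is determined on $R$ by $\theta_i|_R=\sigma_i$, which is bijective, and on the generators $x_j$ ($j<i$) by $x_i x_j = c_{j,i} x_j x_i$, i.e., $\theta_i(x_j)=c_{j,i}x_j$; since $c_{j,i}$ is invertible, $x_j = c_{j,i}^{-1}\theta_i(x_j)$ lies in the image of $\theta_i$, and more generally every monomial $x_1^{\beta_1}\cdots x_{i-1}^{\beta_{i-1}}$ is (up to an invertible scalar from $R$, coming from repeated application of the relations) in the image, so $\theta_i$ is surjective; injectivity is automatic since $A$ is a domain (Proposition \ref{1.1.10}) or again since left multiplication by $x_i$ is injective on the free $R$-module $A$. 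Hence every $\theta_i$ is bijective.

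The main obstacle I anticipate is the bookkeeping in the inductive step: verifying carefully that $A_0$ really is a quasi-commutative skew $PBW$ extension with the same PBW basis truncated, and that $A$ is free as a \emph{left} $A_0$-module on the powers of $x_n$ with the conjugation map $\theta_n$ well-defined and multiplicative. None of this is deep, but it requires keeping the non-commutativity straight — in particular that one must move scalars and lower variables to the correct side — and making sure the isomorphism is checked as a ring map, not merely an $R$-module map. Everything else (the $n=1$ base case, and part (ii)) is then routine.
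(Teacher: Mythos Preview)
The paper does not actually prove this proposition here; it simply cites \cite{lezamareyes1}. Your inductive argument---building $A$ as an iterated Ore extension by adjoining one variable at a time and checking that the subring $A_0=\sigma(R)\langle x_1,\dots,x_{n-1}\rangle$ is again a quasi-commutative skew $PBW$ extension, that $A$ is left $A_0$-free on $\{x_n^k\}$, and that $x_n a=\theta_n(a)x_n$ defines a ring endomorphism $\theta_n$ of $A_0$---is the standard proof and is essentially what one finds in that reference, so your approach matches.

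One small point deserves tightening. In both parts you invoke injectivity of $\theta_n$ (or $\theta_i$) via Proposition~\ref{1.1.10}, but that proposition requires $R$ to be a domain, which is \emph{not} assumed here. For part~(i) this does no harm: you do not need $\theta_n$ injective to get the ring isomorphism $A\cong A_0[z_n;\theta_n]$, since the bijection on PBW bases already makes the $A_0$-linear map $z_n^k\mapsto x_n^k$ a ring isomorphism regardless. For part~(ii) your surjectivity computation in fact already yields bijectivity: under the bijectivity hypothesis each $c_{j,i}$ (for $j<i$) is invertible and each $\sigma_j$ is an automorphism, so $\theta_i$ sends every PBW basis element $r\,x_1^{\beta_1}\cdots x_{i-1}^{\beta_{i-1}}$ to a unit of $R$ times the same monomial; hence $\theta_i$ is invertible as an $R$-linear map on the free module $A_{i-1}$, giving both injectivity and surjectivity at once. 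So drop the appeal to Proposition~\ref{1.1.10} and phrase the bijectivity of $\theta_i$ directly from that basis computation.
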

\begin{proof}
 See \cite{lezamareyes1}.
\end{proof}

\begin{proposition}\label{1.3.2}
Let $A$ be an arbitrary skew $PBW$ extension of $R$. Then, $A$ is a filtered ring with filtration
given by
\begin{equation}\label{eq1.3.1a}
F_m:=\begin{cases} R & {\rm if}\ \ m=0\\ \{f\in A\mid {\rm deg}(f)\le m\} & {\rm if}\ \ m\ge 1
\end{cases}
\end{equation}
and the corresponding graded ring $Gr(A)$ is a quasi-commutative skew $PBW$ extension of $R$.
Moreover, if $A$ is bijective, then $Gr(A)$ is a quasi-commutative bijective skew $PBW$ extension
of $R$.
\end{proposition}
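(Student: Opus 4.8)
The plan is to establish the two claims of Proposition~\ref{1.3.2} directly from the defining relations of a skew $PBW$ extension. First I would verify that the family $\{F_m\}_{m\ge 0}$ is indeed an exhaustive, ascending filtration of $A$ by $R$-subbimodules with $1\in F_0$; here $\deg(f)$ means the total degree of the polynomial $f$ written in the standard basis ${\rm Mon}(A)$, and $F_0=R$, $F_m\subseteq F_{m+1}$, $\bigcup_m F_m=A$ are immediate from condition (ii) of Definition~\ref{gpbwextension}. The key point is multiplicativity, $F_mF_k\subseteq F_{m+k}$: it suffices to check this on basis monomials $x^\alpha\in F_m$ (so $|\alpha|\le m$) and $x^\beta\in F_k$. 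Using Proposition~\ref{sigmadefinition} one moves the variables of $x^\alpha$ past those of $x^\beta$ one at a time; each application of $x_ir=\sigma_i(r)x_i+\delta_i(r)$ and of relation~(\ref{sigmadefinicion2}), $x_jx_i=c_{i,j}x_ix_j+(\text{terms in }R+Rx_1+\cdots+Rx_n)$, either preserves or strictly lowers the total degree, so the product $x^\alpha x^\beta$ lies in $F_{|\alpha|+|\beta|}\subseteq F_{m+k}$. This is really the combinatorial heart of the argument and the step I expect to demand the most care in bookkeeping, although it is essentially the same computation used to prove that $A$ has a well-behaved PBW basis.

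Next I would identify the associated graded ring $Gr(A)=\bigoplus_{m\ge 0}F_m/F_{m-1}$ (with $F_{-1}:=0$). Because ${\rm Mon}(A)$ is an $R$-basis of $A$ compatible with the filtration, the images of the monomials $x^\alpha$ with $|\alpha|=m$ form an $R$-basis of $F_m/F_{m-1}$, so $Gr(A)$ is again left $R$-free on the monomials in the images $\overline{x_1},\dots,\overline{x_n}$; in particular $R\hookrightarrow Gr(A)$ as $F_0$. Now pass the relations to the graded level: from $x_ir=\sigma_i(r)x_i+\delta_i(r)$, the term $\delta_i(r)\in R=F_0$ has degree $0$ while $x_ir$ and $\sigma_i(r)x_i$ have degree $1$, so in $Gr(A)$ one gets $\overline{x_i}\,r=\sigma_i(r)\overline{x_i}$, i.e. condition (iii') holds with $c_{i,r}=\sigma_i(r)$ (nonzero since $\sigma_i$ is injective and $r\neq 0$). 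Similarly, in relation~(\ref{sigmadefinicion2}) the correction term lies in $R+Rx_1+\cdots+Rx_n=F_1$, which is of degree $\le 1<2$, whereas $x_jx_i$ and $c_{i,j}x_ix_j$ are of degree $2$; hence $\overline{x_j}\,\overline{x_i}=c_{i,j}\overline{x_i}\,\overline{x_j}$ in $Gr(A)$, which is condition (iv'). Thus $Gr(A)$ is a quasi-commutative skew $PBW$ extension of $R$, with the same endomorphisms $\sigma_i$ and the same constants $c_{i,j}$.

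Finally, for the bijective case, observe from the previous paragraph that the endomorphisms attached to $Gr(A)$ are exactly the $\sigma_i$ of $A$ and the graded constants are exactly the $c_{i,j}$ of $A$; so if $A$ is bijective — meaning every $\sigma_i$ is bijective and every $c_{i,j}$ ($i<j$) is invertible — then the same data witness that $Gr(A)$ is bijective. One small point worth spelling out is that the constants $c_{i,j}$ appearing in the quasi-commutative relations (iv') of $Gr(A)$ really are the original $c_{i,j}$ and not some modified elements, which is clear since the degree-$2$ part of relation~(\ref{sigmadefinicion2}) is untouched when reducing modulo $F_1$. Assembling these observations gives the statement.
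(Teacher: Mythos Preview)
Your argument is correct and is precisely the standard direct verification one expects here. Note, however, that the paper itself does not supply a proof of this proposition: it simply refers the reader to \cite{lezamareyes1}. So there is no ``paper's own proof'' to compare against beyond that citation; your write-up is essentially what the cited reference does --- check multiplicativity of the degree filtration via the defining relations, observe that passing to $Gr(A)$ kills the lower-degree correction terms in (\ref{sigmadefinicion1}) and (\ref{sigmadefinicion2}) to obtain (iii$'$) and (iv$'$), and note that the $\sigma_i$ and $c_{i,j}$ survive unchanged so bijectivity is inherited. The only cosmetic point: calling the $F_m$ ``$R$-subbimodules'' is fine but not needed for the filtered-ring structure; additive subgroups with $1\in F_0$ and $F_mF_k\subseteq F_{m+k}$ suffice.
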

\begin{proof}
See \cite{lezamareyes1}.
\end{proof}

\begin{proposition}[Hilbert Basis Theorem]\label{1.3.4}
Let $A$ be a bijective skew $PBW$ extension of $R$. If $R$ is a left $($right$)$ Noetherian ring
then $A$ is also a left $($right$)$ Noetherian ring.
\end{proposition}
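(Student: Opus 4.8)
The plan is to reduce the assertion to the classical Hilbert basis theorem for skew polynomial rings by passing to the associated graded ring, and then to lift Noetherianity back up along the filtration. First I would invoke Proposition \ref{1.3.2}: since $A$ is bijective, $A$ is a filtered ring with the ascending filtration $\{F_m\}_{m\geq 0}$ given by \eqref{eq1.3.1a}, this filtration is exhaustive (that is, $A=\bigcup_{m\geq 0}F_m$), it satisfies $F_pF_q\subseteq F_{p+q}$ and $1\in F_0=R$, and the associated graded ring $Gr(A)=\bigoplus_{m\geq 0}F_m/F_{m-1}$ (with $F_{-1}:=0$) is a quasi-commutative bijective skew $PBW$ extension of $R$.

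Next I would apply Proposition \ref{1.3.3}: being quasi-commutative and bijective, $Gr(A)$ is isomorphic to an iterated skew polynomial ring of endomorphism type $R[z_1;\theta_1]\cdots [z_n;\theta_n]$ in which every $\theta_i$ is bijective, i.e.\ an automorphism of the corresponding subring. Now the classical skew Hilbert basis theorem (see e.g.\ \cite{McConnell}) states that if $B$ is a left Noetherian ring and $\theta$ is an automorphism of $B$, then $B[z;\theta]$ is left Noetherian. Starting from $R$ left Noetherian and iterating this statement $n$ times shows that $Gr(A)$ is left Noetherian.

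Finally I would use the standard transfer principle for filtered rings: if $A$ carries an exhaustive ascending filtration $\{F_m\}_{m\geq 0}$ with $F_pF_q\subseteq F_{p+q}$ and $1\in F_0$, and if $Gr(A)$ is left Noetherian, then $A$ is left Noetherian. The argument is the usual one: given a left ideal $I\subseteq A$, filter it by $I\cap F_m$, form the graded left ideal of $Gr(A)$ generated by the principal symbols of the elements of $I$, choose finitely many generators of this graded ideal, lift them to elements of $I$, and verify by induction on degree that these lifts generate $I$. Combining the three steps yields the left Noetherian conclusion; the right Noetherian case is entirely symmetric, working with right ideals and the right-hand versions of the cited propositions.

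The only place that requires genuine care rather than citation is the filtered-to-graded transfer in the last step, and in particular checking that its hypotheses are exactly those supplied by Proposition \ref{1.3.2} (exhaustiveness of the filtration and $1\in F_0$). The bijectivity hypothesis on $A$ is essential and is used twice: it is what guarantees that $Gr(A)$ is bijective, hence that the $\theta_i$ are automorphisms — a mere injective endomorphism would not suffice for the skew polynomial Hilbert basis theorem — and without it one cannot describe $Gr(A)$ via Proposition \ref{1.3.3} in the first place.
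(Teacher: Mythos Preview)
The paper does not give a proof here; it simply cites \cite{lezamareyes1}. Your argument is correct and is exactly the standard route taken in that reference: pass to $Gr(A)$ via Proposition~\ref{1.3.2}, identify $Gr(A)$ with an iterated Ore extension of automorphism type via Proposition~\ref{1.3.3}, apply the skew Hilbert basis theorem $n$ times, and then lift Noetherianity back through the exhaustive positive filtration.

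One small correction to your final remark: Proposition~\ref{1.3.3}(i) holds for \emph{any} quasi-commutative skew $PBW$ extension, so even without bijectivity $Gr(A)$ can be written as an iterated skew polynomial ring of endomorphism type. Bijectivity is used only through part~(ii), to ensure each $\theta_i$ is an automorphism --- and that is the sole place the hypothesis is genuinely needed, since for a non-surjective $\theta$ the ring $B[z;\theta]$ need not be Noetherian even when $B$ is.
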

\begin{proof}
See \cite{lezamareyes1}.
\end{proof}
Since the objects studied in the present paper are the skew $PBW$ extensions, it is necessary to
guarantee the $\mathcal{IBN}$ and $\mathcal{RC}$ properties for these rings.
\begin{lemma}\label{7.3.1}
Let $B$ be a filtered ring. If $Gr(B)$ is $\mathcal{RC}$ $($$\mathcal{IBN}$$)$, then $B$ is
$\mathcal{RC}$ $($$\mathcal{IBN}$$)$.
\end{lemma}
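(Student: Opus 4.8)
The plan is to transfer the relevant module-theoretic statements from the associated graded ring $Gr(B)$ back to $B$ by using the standard associated-graded functor on filtered modules, applied to the free modules $B^r$ and $B^s$ equipped with their natural filtrations. First I would recall that if $B$ is a filtered ring, then $B^r$ carries the product filtration, and $Gr(B^r)\cong Gr(B)^r$ as graded $Gr(B)$-modules. The key observation is that a filtered homomorphism $\varphi\colon B^r\to B^s$ of filtered modules induces a graded homomorphism $Gr(\varphi)\colon Gr(B)^r\to Gr(B)^s$, and — this is the point where one must be a little careful — if $\varphi$ is surjective and strict with respect to the filtrations, then $Gr(\varphi)$ is surjective as well; dually for injectivity and isomorphisms.

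So for the $\mathcal{RC}$ case I would argue as follows. Suppose $f\colon B^r\to B^s$ is an epimorphism. A priori $f$ need not be strict, but since any homomorphism between \emph{free} filtered modules over a filtered ring can be represented by a matrix with entries in $B$, one can choose the filtration on $B^r$ (shifting the degrees of the canonical generators upward if necessary) so that $f$ becomes a filtered morphism which is moreover strict; concretely, if the matrix of $f$ has entries of degree $\le m$, one regrades the source so the $i$-th generator sits in filtration degree $m$, and then $f$ carries $F_k(B^r)$ onto $F_k(B^s)$ for all $k$. Then $Gr(f)\colon Gr(B)^r\to Gr(B)^s$ is an epimorphism of free graded $Gr(B)$-modules, and in particular an epimorphism of $Gr(B)$-modules; since $Gr(B)$ is $\mathcal{RC}$ by hypothesis, $r\ge s$. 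Hence $B$ is $\mathcal{RC}$.

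For the $\mathcal{IBN}$ case the argument is the same but with an isomorphism in place of an epimorphism: given $B^r\cong B^s$, after a suitable regrading the isomorphism becomes a strict filtered isomorphism, so $Gr(B)^r\cong Gr(B)^s$ as $Gr(B)$-modules, and $\mathcal{IBN}$ of $Gr(B)$ forces $r=s$. Alternatively, since the excerpt already records that $\mathcal{RC}$ implies $\mathcal{IBN}$, the $\mathcal{IBN}$ conclusion for $B$ follows formally from the $\mathcal{RC}$ conclusion for $B$ once one knows $Gr(B)$ is $\mathcal{RC}$; but if one only assumes $Gr(B)$ is $\mathcal{IBN}$, the direct regrading argument is needed. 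The main obstacle, and the only genuinely nontrivial step, is establishing \emph{strictness} of the regraded map: one must check that shifting the grading on the source by the maximal degree appearing in the matrix of $f$ really does make the induced filtered map strict, so that passing to $Gr$ preserves surjectivity (respectively bijectivity). Once strictness is in hand, everything else is the routine exactness behaviour of $Gr(-)$ on strict short exact sequences of filtered modules together with the identification $Gr(B^n)\cong Gr(B)^n$.
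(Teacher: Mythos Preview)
Your overall strategy coincides with the paper's: endow $B^r$ and $B^s$ with filtrations, pass to the associated graded, and use that $Gr$ preserves surjectivity for \emph{strict} filtered epimorphisms. The gap is precisely in the step you flag as ``the only genuinely nontrivial step'': your proposed mechanism for achieving strictness is false. Shifting all source generators to degree $m$ (the maximal degree among the matrix entries of $f$) does \emph{not} in general make $f$ strict with respect to the standard filtration on the target.

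Take $B=k[x]$ with the degree filtration, $r=2$, $s=1$, and $f(a,b)=(1+x)a+xb$; this is surjective since $f(1,-1)=1$. Both matrix entries have degree $1$, so $m=1$ and your recipe places $e_1,e_2$ in filtration degree $1$, giving $F_k(B^2)=B_{k-1}e_1\oplus B_{k-1}e_2$. Then $f(F_0(B^2))=0$ while $F_0(B)=k\neq 0$, so $f$ is not strict and $Gr(f)$ misses the degree-$0$ part of $Gr(B)$ entirely (indeed the leading terms of $1+x$ and $x$ are both $x$, so the image of $Gr(f)$ is the proper ideal $(\bar{x})$). No shift of the source alone can repair this: for $f$ to be filtered with the standard target one needs $e_i$ in degree at least $\deg f(e_i)\geq 1$, whence $F_0$ of the source is zero and strictness already fails at level $0$.

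The paper circumvents this by leaving the \emph{source} filtration standard and instead filtering the \emph{target} $N=B^s$ by the image filtration $F_p(N):=f(F_p(M))=\sum_i B_p\cdot f(e_i)$. Strictness is then automatic and $Gr(f)$ is surjective. The price is that $Gr(N)\cong Gr(B)^s$ is no longer immediate; the paper argues directly that the canonical basis $\{f_j\}_{j=1}^{s}$ of $B^s$ is a filtered basis for this image filtration (each $f_j$ lies in some fixed $F_p(N)$, and one verifies $F_q(N)=\bigoplus_j B_{q-p}f_j$ for all $q$), so that $Gr(N)$ is graded-free of rank $s$. That verification, rather than any regrading of the source, is where the real work sits.
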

\begin{proof}
Let $\{B_p\}_{p\geq 0}$ be the filtration of $B$ and $f:B^r\to B^s$ an epimorphism. For $M:=B^r$ we
consider the standard positive filtration given by
\begin{center}
$F_0(M):=B_0\cdot e_1+\cdots+ B_0\cdot e_r$, $F_p(M):=B_pF_0(M)$, $p\geq 1$,
\end{center}
where $\{e_i\}_{i=1}^r$ is the canonical basis of $B^r$. Let $e_i':=f(e_i)$, then $B^s$ is
generated by $\{e_i'\}_{i=1}^r$ and $N:=B^s$ has an standard positive filtration given by
\begin{center}
$F_0(N):=B_0\cdot e_1'+\cdots+ B_0\cdot e_r'$, $F_p(N):=B_pF_0(N)$, $p\geq 1$.
\end{center}
Note that $f$ is filtered and strict: In fact, $f(F_p(M))=B_pf(F_0(M))=B_p(B_0\cdot f(e_1)+\cdots
+B_0\cdot f(e_r))=B_p(B_0\cdot e_1'+\cdots+ B_0\cdot e_r')=B_pF_0(N)=F_p(N)$. This implies that
$Gr(M)\xrightarrow{Gr(f)}Gr(N)$ is surjective. If we prove that $Gr(M)$ and $Gr(N)$ are free over
$Gr(B)$ with bases of $r$ and $s$ elements, respectively, then from the hypothesis we conclude that
$r\geq s$ and hence $B$ is $\mathcal{RC}$.

Since every $e_i\in F_0(M)$ and $F_p(M)=\sum_{i=1}^r \oplus B_p\cdot e_i$, $M$ is filtered-free
with filtered-basis $\{e_i\}_{i=1}^r$, so $Gr(M)$ is graded-free with graded-basis
$\{\overline{e_i}\}_{i=1}^r$, $\overline{e_i}:=e_i+F_{-1}(M)=e_i$ (recall that by definition of
positive filtration, $F_{-1}(M):=0$). For $Gr(N)$ note that $N$ is also filtered-free with respect
the filtration $\{F_p(N)\}_{p\geq 0}$ given above: Indeed, we will show next that the canonical
basis $\{f_j\}_{j=1}^s$ of $N$ is a filtered basis. If $f_j=x_{j1}\cdot e_1'+\cdots+x_{jr}\cdot
e_r'$, with $x_{ji}\in B_{p_{ij}}$, let $p:=\max\{p_{ij}\}$, $1\leq i\leq r$, $1\leq j\leq s$, then
$f_j\in F_p(N)$, moreover, for every $q$, $B_{q-p}\cdot f_1\oplus \cdots \oplus B_{q-p}\cdot
f_s\subseteq B_{q-p}F_p(N)\subseteq F_q(N)$ (recall that for $k<0$, $B_k=0$); in turn, let $x\in
F_q(N)$, then $x=b_1\cdot f_1+\cdots+b_s\cdot f_s$ and in $Gr(N)$ we have $\overline{x}\in
Gr(N)_q$, $\overline{x}=\overline{b_1}\cdot \overline{f_1}+\cdots+\overline{b_s}\cdot
\overline{f_s}$, if $b_j\in B_{u_j}$, let $u:=\max\{u_{j}\}$, so $\overline{b_j}\cdot
\overline{f_j}\in Gr(N)_{u+p}$, so $q=u+p$, i.e., $u=q-p$ and hence $x\in B_{q-p}\cdot f_1\oplus
\cdots \oplus B_{q-p}\cdot f_s$, Thus, we have proved that $B_{q-p}\cdot f_1\oplus \cdots \oplus
B_{q-p}\cdot f_s=F_q(N)$, for every $q$, and consequently, $\{f_j\}_{j=1}^s$ is a filtered basis of
$N$. From this we conclude that $Gr(N)$ is graded-free with graded-basis
$\{\overline{f_j}\}_{j=1}^s$, $\overline{f_j}:=f_j+F_{p-1}(N)$.

We can repeat the previuos proof for the $\mathcal{IBN}$ property but assuming that $f$ is an
isomorphism.
\end{proof}

\begin{theorem}\label{618}
Let $A$ be a skew $PBW$ extension of a ring $R$. Then, $A$ is $\mathcal{RC}$ $(\mathcal{IBN})$ if
and only if $R$ is $\mathcal{RC}$ $(\mathcal{IBN})$.
\end{theorem}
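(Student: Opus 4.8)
The plan is to prove the two implications separately; the direction ``$A$ $\mathcal{RC}$ (resp.\ $\mathcal{IBN}$) $\Rightarrow$ $R$ $\mathcal{RC}$ (resp.\ $\mathcal{IBN}$)'' is essentially formal, and the converse carries the content. For the formal direction, view $A$ as an $(A,R)$-bimodule via the inclusion $R\subseteq A$ of Definition \ref{gpbwextension}(i). Given an epimorphism $g\colon R^r\to R^s$ of left $R$-modules, apply $A\otimes_R-$: since $A\otimes_R R^t\cong A^t$ as left $A$-modules for every $t$ and tensor product is right exact, one obtains an epimorphism $A^r\to A^s$ of left $A$-modules, whence $r\ge s$ because $A$ is $\mathcal{RC}$; thus $R$ is $\mathcal{RC}$. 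The same computation with ``isomorphism'' in place of ``epimorphism'' gives the $\mathcal{IBN}$ assertion.

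For the converse the idea is to kill the derivations by passing to the associated graded ring. By Proposition \ref{1.3.2}, $A$ is a filtered ring whose associated graded ring $Gr(A)$ is a quasi-commutative skew $PBW$ extension of $R$, and by Proposition \ref{1.3.3}(i) we may write $Gr(A)\cong R[z_1;\theta_1]\cdots[z_n;\theta_n]$, an iterated skew polynomial ring of endomorphism type. By Lemma \ref{7.3.1} it is enough to show that $Gr(A)$ is $\mathcal{RC}$ (resp.\ $\mathcal{IBN}$), and I would argue by induction on $n$, the case $n=0$ being the hypothesis on $R$. In the inductive step put $S:=R[z_1;\theta_1]\cdots[z_{n-1};\theta_{n-1}]$, which is $\mathcal{RC}$ by the inductive hypothesis, and $T:=S[z_n;\theta_n]$. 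Since $\theta_n$ is an endomorphism of $S$ one has $z_nS\subseteq Sz_n$, so $Tz_n$ is a two-sided ideal of $T$ with $T/Tz_n\cong S$ (quotient by the polynomials of zero constant term); then the base-change argument above, now applied to the surjection $T\twoheadrightarrow S$, sends an epimorphism $T^r\to T^s$ to an epimorphism $S^r\to S^s$, forcing $r\ge s$. Hence $T$ is $\mathcal{RC}$, the induction closes, $Gr(A)$ is $\mathcal{RC}$, and Lemma \ref{7.3.1} lifts this to $A$; the $\mathcal{IBN}$ case is identical with isomorphisms.

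The point that needs care — rather than any real difficulty — is that the detour through $Gr(A)$ is genuinely necessary: a skew $PBW$ extension need not admit a ring surjection onto $R$ (for instance, in the Weyl algebra $A_1(K)=K[t][x;\frac{d}{dt}]$ the two-sided ideal generated by $x$ is all of $A_1(K)$, since $xt-tx=1$), and it is exactly quasi-commutativity — the vanishing of the $\delta_i$ — that makes $Tz_n$ a \emph{two-sided} ideal in the inductive step. Beyond these two observations the only machinery used is right exactness of $\otimes$ on free modules together with Lemma \ref{7.3.1}.
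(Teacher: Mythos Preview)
Your proof is correct and follows essentially the same route as the paper: the forward direction via base change along $R\hookrightarrow A$, and the converse by passing to $Gr(A)$, identifying it with an iterated skew polynomial ring of endomorphism type (Propositions \ref{1.3.2} and \ref{1.3.3}), using the evaluation-at-zero surjection $S[z;\theta]\twoheadrightarrow S$ to transfer $\mathcal{RC}$ down inductively, and then invoking Lemma \ref{7.3.1}. Your explicit justification that $Tz_n$ is two-sided and your remark on why the passage to $Gr(A)$ is genuinely needed are helpful elaborations, but the strategy is the same.
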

\begin{proof}
We consider only the proof for $\mathcal{RC}$, the case $\mathcal{IBN}$ is completely analogous.

$\Rightarrow)$: Since $R\hookrightarrow A$, if $A$ is $\mathcal{RC}$, then $R$ is $\mathcal{RC}$.
In fact, let $S$ and $T$ be rings and let $S\xrightarrow{f} T$ be a ring homomorphism, if $T$ is a
$\mathcal{RC}$ ring then $S$ is also a $\mathcal{RC}$ ring: $T$ is a right $S$-module, $t\cdot
s:=tf(s)$; suppose that $S^r\xrightarrow{f}S^s$ is an epimorphism, then $T\otimes_S
S^r\xrightarrow{i_T\otimes f}T\otimes_S S^s$ is also an epimorphism of left $T$-modules, i.e., we
have an epimorphism $T^r\rightarrow T^s$, so $r\geq s$

$\Leftarrow)$: We consider first the skew polynomial ring $R[x;\sigma]$ of endomorphism type, then
$R[x;\sigma]\to R$ given by $p(x)\to p(0)$ is a ring homomorphism, so $R[x;\sigma]$ is
$\mathcal{RC}$ since $R$ is $\mathcal{RC}$. By Propositions \ref{1.3.3} and \ref{1.3.2}, $Gr(A)$ is
isomorphic to an iterated skew polynomial ring $R[z_1;\theta_1]\cdots [z_{n};\theta_n]$, so $Gr(A)$
is $\mathcal{RC}$. Only rest to apply Lemma \ref{7.3.1}.
\end{proof}

\section{$d$-Hermite rings and stable rank}

There is a famous conjecture in commutative algebra that says that if $R$ is a commutative
$\mathcal{H}$-ring, then the polynomial ring $R[x]$ is $\mathcal{H}$ (see \cite{Lam}). As we
observed at the beginning of the paper, this conjecture for skew $PBW$ extensions is not true.
Another example is the skew polynomial ring $K[t][x;\sigma]$, with $K$ a field and
$\sigma(t):=t+1$; in \cite{McConnell} is proved that $K[t][x;\sigma]$ is not $\mathcal{H}$ although $K[t]$ is
$\mathcal{H}$. Thus, instead of considering the $\mathcal{H}$ condition and the conjecture for skew
$PBW$ extensions, we will study a weakly property, the $d$-Hermite property. The following
proposition induces the definition of $d$-Hermite rings.
\begin{proposition}\label{7.1.1}
Let $S$ be a ring. For any integer $d\geq 0$, the following statements are equivalent:
\begin{enumerate}
\item[{\rm (i)}]Any stably free module of rank $\geq d$ is free.
\item[{\rm (ii)}]Any unimodular row matrix over $S$ of length $\geq d+1$ can be
completed to an invertible matrix over $S$.
\item[{\rm (iii)}]For every $r\geq d+1$, if $\textbf{u}$ is an unimodular row matrix of size $1\times
r$, then there exists an invertible matrix $U\in GL_{r}(S)$ such that $\textbf{u}U=(1,0,\dots, 0)$,
i.e., the general linear group $GL_r(S)$ acts transitively on $Um_r(r,S)$.
\item[$\rm (iv)$]For every $r\geq d+1$, given an unimodular matrix $F$ of size $s\times r$,
$r\geq s$, there exists $U\in GL_r(S)$ such that
\begin{center}
$FU= \begin{bmatrix}I_s & | & 0\end{bmatrix}$.
\end{center}
\end{enumerate}
\end{proposition}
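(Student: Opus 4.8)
The plan is to prove the chain of implications (i) $\Rightarrow$ (ii) $\Rightarrow$ (iii) $\Rightarrow$ (iv) $\Rightarrow$ (i), translating freely between the module-theoretic language and the matrix language via the standard dictionary: a unimodular row $\textbf{u}$ of size $1 \times r$ gives a surjection $S^r \to S$, whose kernel $M$ satisfies $M \oplus S \cong S^r$, so $M$ is stably free of rank $r-1$; conversely every stably free module of rank $t$ with $S^{s+t} \cong S^s \oplus M$ corresponds, after bumping $s$ down to $1$ if possible, to such a row. The main work is bookkeeping with these correspondences, so the strategy is to set up the dictionary once, carefully, and then read off each implication.

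First I would treat (i) $\Rightarrow$ (ii). Given a unimodular row $\textbf{u}$ of length $r \geq d+1$, form the exact sequence $0 \to M \to S^r \xrightarrow{\textbf{u}} S \to 0$; it splits, so $M \oplus S \cong S^r$ and $M$ is stably free of rank $r-1 \geq d$. By (i), $M \cong S^{r-1}$, hence $S^r \cong S^{r-1} \oplus S$ in a way compatible with the projection, and choosing a basis of $M$ together with a splitting gives an invertible $r \times r$ matrix whose first row is $\textbf{u}$; that is the desired completion. For (ii) $\Rightarrow$ (iii): if $\textbf{u}$ is completed to $V \in GL_r(S)$ with first row $\textbf{u}$, then $U := V^{-1}$ satisfies $\textbf{u}U = e_1 V V^{-1}$ read off the first row, i.e. $\textbf{u}U = (1,0,\dots,0)$; this is just the observation that the first row of $V$ times $V^{-1}$ is the first standard vector. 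So $GL_r(S)$ acts transitively on $Um_r(r,S)$, since any two unimodular rows can each be carried to $(1,0,\dots,0)$.

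Next, (iii) $\Rightarrow$ (iv) is the step I expect to be the main obstacle, since it requires an induction on the number $s$ of rows rather than a single application of transitivity. Given a unimodular $s \times r$ matrix $F$ with $r \geq d+1 \geq s$, note that $F$ unimodular means $F$ has a right inverse, so in particular its last row $\textbf{u}_s$ is a unimodular row of length $r \geq d+1$; by (iii) there is $U_1 \in GL_r(S)$ with $\textbf{u}_s U_1 = (1,0,\dots,0)$. Then $FU_1$ has last row $(1,0,\dots,0)$; subtracting suitable multiples of the last row from the others via an elementary (hence invertible) matrix $E$, the matrix $EFU_1$ has the shape of a block matrix with last column $e_s$ and an $(s-1)\times(r-1)$ block $F'$ in the upper-left that is again unimodular (a right inverse of $EFU_1$ restricts to one for $F'$, and $r-1 \geq d+1$ still holds because... one must check $r-1 \ge d+1$, i.e. invoke $r \ge d+2$; when $r = d+1 = s$ the matrix is already square and unimodular, hence in $GL_r(S)$, and the claim is immediate, so the induction only needs the case $r \geq s+1$). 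Iterating, we reduce to $[I_s \mid 0]$, and composing all the invertible matrices used gives the single $U \in GL_r(S)$ with $FU = [I_s \mid 0]$. Finally (iv) $\Rightarrow$ (i): a stably free $M$ of rank $t \geq d$ satisfies $S^{s+t} \cong S^s \oplus M$; writing the inclusion $S^s \hookrightarrow S^{s+t}$ as a unimodular $s \times (s+t)$ matrix $F$ with $s+t \geq d+1$, apply (iv) to get $U \in GL_{s+t}(S)$ with $FU = [I_s \mid 0]$, which exhibits the complement of $S^s$ — namely $M$ — as the free module $S^t$. This closes the cycle.

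Throughout, the routine points to verify are: that split exact sequences yield the stated matrix completions and vice versa; that elementary row operations are realized by invertible matrices; and that "unimodular" passes to the relevant submatrices in the induction. None of these is deep, but the proposition is essentially the assertion that four standard reformulations coincide, so the value is in stating the dictionary precisely and checking the one genuinely inductive step (iii) $\Rightarrow$ (iv) without gaps at the boundary case $r = s$.
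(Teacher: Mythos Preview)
The paper's own proof is merely a citation to the $d=0$ case in \cite{LezamaGallego1}, so your cycle (i)$\Rightarrow$(ii)$\Rightarrow$(iii)$\Rightarrow$(iv)$\Rightarrow$(i) is presumably the intended route, and your arguments for (i)$\Leftrightarrow$(ii)$\Leftrightarrow$(iii) and for (iv)$\Rightarrow$(i) are correct (with one slip: in (iv)$\Rightarrow$(i) it is the split \emph{projection} $S^{s+t}\to S^s$, not the inclusion, that furnishes the $s\times(s+t)$ unimodular matrix).

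There is, however, a genuine and unfixable gap in your (iii)$\Rightarrow$(iv). Your induction passes from an $s\times r$ matrix to an $(s-1)\times(r-1)$ block and needs $r-1\ge d+1$ to re-apply (iii). You address only the sub-case $r=s=d+1$, where moreover the claim ``square unimodular $\Rightarrow$ invertible'' is exactly weak finiteness and is supplied neither by (iii) for $d\ge 1$ nor by the standing $\mathcal{RC}$ hypothesis; and you never treat the range $2\le s<r$ with $d+1\le r<d+s$. In fact (iv) as written does not follow from (i)--(iii): over the Weyl algebra $S=A_1(K)$ conditions (i)--(iii) hold for $d=2$ (every stably free module of rank $\ge 2$ is free), yet a non-free stably free $M$ of rank $1$ gives $M\oplus S^2\cong S^3$ and hence a unimodular $2\times 3$ matrix $F$ with $r=3=d+1$ for which $FU=[I_2\mid 0]$ is impossible, since it would force $M\cong S$. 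The hypothesis in (iv) that makes both your induction and the equivalence go through is $r-s\ge d$ rather than $r\ge d+1$; for $d=0$ these coincide, which is why the cited proof does not encounter the difficulty.
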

\begin{proof}
We can repeat the proof of Theorem 2 in \cite{LezamaGallego1} taking $r\geq d+1$.
\end{proof}
\begin{definition}\label{7.1.2}
Let $S$ be a ring and $d\geq 0$ an integer. $S$ is $d$-Hermite, property denoted by
$d$-$\mathcal{H}$, if $S$ satisfies any of conditions in Proposition \ref{7.1.1}.
\end{definition}

\begin{proposition}
The $d$-Hermite condition is left-right symmetric.
\end{proposition}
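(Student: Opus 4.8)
The plan is to reduce the left-right symmetry of the $d$-Hermite property to the already-known left-right symmetry of the stable rank, using Stafford's result (Proposition \ref{7.3.6}) together with the duality between stably free left modules and stably free right modules that is provided by the transpose operation on the defining matrices. The crucial observation is that condition (ii) of Proposition \ref{7.1.1} is purely matrix-theoretic: $S$ is $d$-$\mathcal{H}$ (as a ring of left modules) if and only if every unimodular row matrix of length $\geq d+1$ over $S$ can be completed to an invertible matrix. Since $GL_r(S)$ is closed under transposition and $\textbf{u}$ is a unimodular row over $S$ exactly when $\textbf{u}^T$ is a unimodular column over $S$, completing $\textbf{u}$ to an invertible matrix on the left is the same statement, after transposing, as completing the column $\textbf{u}^T$ to an invertible matrix on the right — which is the ``right'' version of the $d$-Hermite condition. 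So at the level of matrices there is essentially nothing to prove; the content is matching this with the module-theoretic formulation on each side.

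First I would make precise the two a priori different notions: call $S$ \emph{left $d$-$\mathcal{H}$} if every stably free left $S$-module of rank $\geq d$ is free (equivalently any of (i)--(iv) of Proposition \ref{7.1.1}), and \emph{right $d$-$\mathcal{H}$} if the analogous statement holds for right modules. The equivalence (i)$\Leftrightarrow$(ii) in Proposition \ref{7.1.1}, applied to $S$ and to $S^{\mathrm{op}}$, shows that left $d$-$\mathcal{H}$ $\Leftrightarrow$ every unimodular row of length $\geq d+1$ over $S$ completes to a matrix in $GL_r(S)$, and right $d$-$\mathcal{H}$ $\Leftrightarrow$ every unimodular row of length $\geq d+1$ over $S^{\mathrm{op}}$ completes to a matrix in $GL_r(S^{\mathrm{op}})$. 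Now $GL_r(S^{\mathrm{op}})=GL_r(S)^{\mathrm{op}}$ via transposition, and a row $\textbf{u}$ over $S^{\mathrm{op}}$ of length $r$ is unimodular over $S^{\mathrm{op}}$ precisely when the column $\textbf{u}^T$ is unimodular over $S$; transposing the whole completion statement turns the right-hand condition into ``every unimodular column of length $\geq d+1$ over $S$ completes to a matrix in $GL_r(S)$''. Hence it remains only to check that, over any ring $S$, unimodular rows of length $r$ complete to invertible matrices if and only if unimodular columns of length $r$ do. This last point follows again by transposition: if $C$ is a unimodular column, then $C^T$ is a unimodular row, complete it to $U\in GL_r(S)$, and $U^T\in GL_r(S)$ has $C$ as a column; and conversely.

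An alternative, perhaps cleaner, route — which I would at least mention — is to invoke Proposition \ref{7.3.6} directly: by that proposition every stably free module (left or right) of rank $\geq \mathrm{sr}(S)$ is free, and $\mathrm{sr}(S)$ is left-right symmetric (stated in the introductory section). Thus both the left and the right $d$-$\mathcal{H}$ properties hold automatically for $d\geq \mathrm{sr}(S)$, so the only range in which symmetry carries genuine information is $0\leq d<\mathrm{sr}(S)$, and there one falls back on the transpose argument above. I would present the transpose argument as the main proof since it is self-contained and does not lose the low-rank range, using the Stafford remark only as a sanity check.

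The main obstacle is not mathematical depth but bookkeeping: one must be careful that ``unimodular'' for rectangular matrices was defined asymmetrically in the excerpt (left inverse when $r\geq s$, right inverse when $s\geq r$), so transposing a unimodular $s\times r$ matrix with $r\geq s$ yields a unimodular $r\times s$ matrix with the roles of ``left inverse'' and ``right inverse'' interchanged — this is exactly why passing to $S^{\mathrm{op}}$ is the natural device, and one should phrase everything through condition (ii) of Proposition \ref{7.1.1} (rows completing to invertible matrices), which is visibly transpose-friendly, rather than through the rank-of-module formulation. Once that is set up, the proof is a two-line transposition argument.
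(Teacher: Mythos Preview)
Your reduction via $S^{\mathrm{op}}$ is set up correctly and lands on the right reformulation: left $d$-$\mathcal{H}$ says unimodular \emph{rows} of length $\geq d+1$ over $S$ complete to elements of $GL_r(S)$, while right $d$-$\mathcal{H}$ says unimodular \emph{columns} of that length do. The gap is in your proof that these two matrix conditions are equivalent. You assert that ``$GL_r(S)$ is closed under transposition'' and that ``$\textbf{u}$ is a unimodular row over $S$ exactly when $\textbf{u}^T$ is a unimodular column over $S$''. Both claims fail for noncommutative $S$. Transposition is an anti-isomorphism $M_r(S)\to M_r(S^{\mathrm{op}})$, not an automorphism of $M_r(S)$; it cannot be used to pass between rows and columns \emph{within} $S$. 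Concretely, over $S=k\langle x,y\rangle$ the row $(1-xy,\ x)$ is unimodular (right inverse $(1,\ y)^T$), yet $S(1-xy)+Sx$ does not contain $1$, so the transposed column is not unimodular; and for $U=\begin{pmatrix}1&x\\0&1\end{pmatrix}\begin{pmatrix}1&0\\y&1\end{pmatrix}\in GL_2(S)$ one checks that $U^T\notin GL_2(S)$. So your ``two-line transposition argument'' for the final step does not go through, and the alternative route via ${\rm sr}(S)$ only covers $d\geq{\rm sr}(S)$, leaving the interesting range untouched.

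The missing idea, which is exactly the content of the argument the paper invokes (Proposition 9 of \cite{LezamaGallego1}; see also \cite{McConnell}, Lemma 11.1.13), is to let the row completion \emph{act} on the column. Suppose every unimodular row of length $r$ completes, and let $\textbf{v}$ be a unimodular column of length $r$. Choose a row $\textbf{u}$ with $\textbf{u}\textbf{v}=1$; then $\textbf{u}$ is unimodular, so it is the first row of some $U\in GL_r(S)$. The column $U\textbf{v}$ has first entry $1$, hence an elementary matrix $E$ gives $EU\textbf{v}=\textbf{e}_1$, and $\textbf{v}=(EU)^{-1}\textbf{e}_1$ is the first column of an invertible matrix. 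This (and its symmetric counterpart) replaces your transposition step and completes the proof.
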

\begin{proof}
We can repeat the proof of Proposition 9 in \cite{LezamaGallego1} taking $r\geq d+1$. See also
\cite{McConnell}, Lemma 11.1.13.
\end{proof}
\begin{remark}\label{7.1.5}
(i) Observe that $0$-Hermite rings coincide with $\mathcal{H}$ rings, and for commutative rings,
$1$-Hermite coincides also with $\mathcal{H}$ (see \cite{Lam}, Theorem I.4.11). If $K$ is a field
with ${\rm char}(K)=0$, by Corollary 4 in \cite{LezamaGallego1}, $A_1(K)$ is $2$-$\mathcal{H}$ but,
as we observed at the beginning of the chapter, $A_1(K)$ is not $1$-$\mathcal{H}$. In general,
$\mathcal{H}\subsetneq 1$-$\mathcal{H}\subsetneq 2$-$\mathcal{H}\subsetneq \cdots$ (see
\cite{Cohn1}).

(ii) Note that $\mathcal{H}=1$-$\mathcal{H}$$\cap \mathcal{WF}$ (a ring $S$ is $\mathcal{WF}$,
\textit{weakly finite}, if for all $n\geq 0$, $P\oplus S^n\cong S^n$ if and only if $P=0$).
\end{remark}

\begin{proposition}\label{7.1.4}
Let $S$ be a ring. Then, $S$ is ${\rm sr}(S)$-$\mathcal{H}$.
\end{proposition}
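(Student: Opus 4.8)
The plan is to show that $S$ satisfies condition (i) of Proposition \ref{7.1.1} with $d = \mathrm{sr}(S)$, i.e., that every stably free $S$-module of rank $\geq \mathrm{sr}(S)$ is free. But this is precisely the content of Proposition \ref{7.3.6} (Stafford's result), which says that any stably free $S$-module $M$ with $\mathrm{rank}(M) \geq \mathrm{sr}(S)$ is free of dimension equal to its rank. Hence $S$ satisfies condition (i) of Proposition \ref{7.1.1} for the integer $d = \mathrm{sr}(S)$, and by Definition \ref{7.1.2}, $S$ is $\mathrm{sr}(S)$-$\mathcal{H}$.

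First I would check the (trivial) edge case: if $\mathrm{sr}(S) = \infty$ there is nothing to prove, since the $d$-Hermite condition is only asserted for integers $d \geq 0$; so we may assume $\mathrm{sr}(S) = d$ is a finite positive integer. Then I would simply invoke Proposition \ref{7.3.6}: any stably free module of rank $\geq d = \mathrm{sr}(S)$ is free. By the equivalences in Proposition \ref{7.1.1}, this is exactly statement (i) for this value of $d$, so $S$ is $d$-$\mathcal{H}$, that is, $\mathrm{sr}(S)$-$\mathcal{H}$.

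There is essentially no obstacle here: the proposition is an immediate corollary of the Stafford-type result already quoted in the preliminaries combined with the definition of the $d$-Hermite property. The only point requiring the slightest care is the bookkeeping of whether one wants "rank $\geq \mathrm{sr}(S)$" or "rank $> \mathrm{sr}(S)$"; since Proposition \ref{7.3.6} is stated with the non-strict inequality $\mathrm{rank}(M) \geq \mathrm{sr}(S)$, and condition (i) of Proposition \ref{7.1.1} with $d = \mathrm{sr}(S)$ asks for stably free modules of rank $\geq \mathrm{sr}(S)$ to be free, the two match exactly and no off-by-one adjustment is needed.
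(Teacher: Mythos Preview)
Your proof is correct and matches the paper's own argument exactly: the paper's proof is the single line ``This follows from Definition \ref{7.1.2} and Theorem \ref{7.3.6},'' which is precisely the combination of Stafford's result with the definition of $d$-$\mathcal{H}$ that you spell out. Your additional remarks on the $\mathrm{sr}(S)=\infty$ case and the off-by-one check are harmless elaborations of the same one-line deduction.
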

\begin{proof}
This follows from Definition \ref{7.1.2} and Theorem \ref{7.3.6}.
\end{proof}
\begin{corollary}\label{7.1.4b}
Let $S$ be a ring. If ${\rm sr}(S)=1$, then $S$ is $\mathcal{H}$.
\end{corollary}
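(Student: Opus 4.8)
The plan is to derive Corollary \ref{7.1.4b} as an immediate consequence of the two results just established, namely Proposition \ref{7.1.4} and Remark \ref{7.1.5}(i). First I would invoke Proposition \ref{7.1.4}, which asserts that every ring $S$ is ${\rm sr}(S)$-$\mathcal{H}$. Specializing to the hypothesis ${\rm sr}(S)=1$, this says precisely that $S$ is $1$-$\mathcal{H}$, i.e. every stably free $S$-module of rank $\geq 1$ is free.

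The remaining obstacle — and it is the only nontrivial point — is to upgrade from "$1$-$\mathcal{H}$" to full "$\mathcal{H}$", which additionally requires that every stably free module of rank $0$ be free (equivalently, $S^{s}\cong S^{s}\oplus M$ forces $M=0$, the weakly finite property $\mathcal{WF}$ recorded in Remark \ref{7.1.5}(ii)). Here I would use the standing assumption of the paper that all rings considered are $\mathcal{RC}$: if $S^{s}\cong S^{s}\oplus M$ with $M$ finitely generated, then $M$ is a quotient of some $S^{t}$, so there is an epimorphism $S^{s+t}\twoheadrightarrow S^{s}\oplus M\cong S^{s}$ combined with the projection; more directly, an isomorphism $S^{s}\xrightarrow{\sim} S^{s}\oplus M$ composed with the inclusion $S^{s}\hookrightarrow S^{s}\oplus M$ going the other way is not quite what we want, so instead observe that the projection $S^{s}\cong S^{s}\oplus M \twoheadrightarrow S^{s}$ onto the first summand is an epimorphism whose kernel is (a copy of) $M$; since $S^{s}$ is free of rank $s$ and this epimorphism together with the splitting shows $M$ is a direct summand making $S^{s}\oplus M$ free of rank $s$, the $\mathcal{RC}$ condition applied to the epimorphism $S^{s}\oplus M\twoheadrightarrow S^{s}\oplus M \oplus M \cong \cdots$ — the cleanest route is: from $S^{s}\cong S^{s}\oplus M$ we get $S^{s}\cong S^{s}\oplus M^{(k)}$ for all $k$ by iteration, and if $M\neq 0$ then $M$ surjects onto a nonzero cyclic module, giving for each $k$ an epimorphism $S^{s}\twoheadrightarrow S^{k}$, contradicting $\mathcal{RC}$ once $k>s$. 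Hence $M=0$, so $S$ is $\mathcal{WF}$.

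Combining the two parts via the identity $\mathcal{H}=1\text{-}\mathcal{H}\cap\mathcal{WF}$ from Remark \ref{7.1.5}(ii), we conclude that $S$ is $\mathcal{H}$.

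I expect the $\mathcal{WF}$ step to be the only place requiring any genuine argument; Proposition \ref{7.1.4} does all the heavy lifting for ranks $\geq 1$, and the rank-$0$ case is handled purely by the $\mathcal{RC}$ hypothesis that is in force throughout the paper. One could alternatively phrase the corollary's conclusion as "$S$ is $0$-$\mathcal{H}$" and note by Remark \ref{7.1.5}(i) that $0$-$\mathcal{H}$ is by definition the same as $\mathcal{H}$, but one still needs the rank-$0$ stably free modules to be free, so the $\mathcal{WF}$ verification above cannot be bypassed.
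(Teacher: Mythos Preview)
Your first step---invoking Proposition \ref{7.1.4} to conclude that $S$ is $1$-$\mathcal{H}$---is correct and matches the paper's opening move. The gap lies in your derivation of $\mathcal{WF}$ from the standing $\mathcal{RC}$ hypothesis: the implication $\mathcal{RC}\Rightarrow\mathcal{WF}$ is \emph{false} in general (the chain $\mathcal{WF}\Rightarrow\mathcal{RC}\Rightarrow\mathcal{IBN}$ is known to be strict at both steps; see e.g.\ \cite{Cohn1}). Concretely, your argument breaks at the clause ``$M$ surjects onto a nonzero cyclic module, giving for each $k$ an epimorphism $S^{s}\twoheadrightarrow S^{k}$'': a nonzero cyclic module has the form $S/I$ with $I$ a proper left ideal, not a free module of rank one. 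What your iteration actually produces is an epimorphism $S^{s}\twoheadrightarrow (S/I)^{k}$, and the rank condition says nothing about surjections onto non-free targets. Over a noncommutative ring $(S/I)^{k}$ may well be generated by fewer than $k$ elements, so no contradiction with $\mathcal{RC}$ is obtained and one cannot conclude $M=0$.

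The paper closes this gap by using the hypothesis ${\rm sr}(S)=1$ a second time rather than falling back on $\mathcal{RC}$: it invokes Evans' theorem \cite{Evans} that stable rank $1$ forces cancellation for finitely generated modules, and then appeals to Proposition~12 of \cite{LezamaGallego1} to pass from $1$-$\mathcal{H}$ plus cancellation to $\mathcal{H}$. Your overall architecture---establish $1$-$\mathcal{H}$, establish $\mathcal{WF}$, then combine via Remark \ref{7.1.5}(ii)---is perfectly sound; the repair is simply to derive $\mathcal{WF}$ from ${\rm sr}(S)=1$ via Evans (cancellation trivially implies $\mathcal{WF}$: apply it to $S^{s}\oplus M\cong S^{s}\oplus 0$) instead of from $\mathcal{RC}$.
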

\begin{proof}
According to Proposition \ref{7.1.4}, $S$ is $1$-$\mathcal{H}$, however, it is well known that
rings with stable rank $1$ are cancellable (see \cite{Evans}), so by proposition 12 in
\cite{LezamaGallego1}, $S$ is $\mathcal{H}$.
\end{proof}

Proposition \ref{7.1.4} motivates the task of computing the stable rank of skew $PBW$ extensions.
For this purpose we need to recall the famous stable range theorem. This theorem relates the stable
rank and the Krull dimension of a ring. The original version of this classical result is due a Bass
(1968, \cite{Bass2}) and states that if $S$ is a commutative Noetherian ring and ${\rm Kdim}(S)=d$
then ${\rm sr}(S)\leq d+1$. Heitmann extends the theorem for arbitrary commutative rings (1984,
\cite{Heitmann}). Lombardi et. al. in 2004 (\cite{Lombardi}, Theorem 2.4; see also
\cite{Lombardi3}) proved again the theorem for arbitrary commutative rings using the Zariski
lattice of a ring and the boundary ideal of an element. This proof is elementary and constructive.
Stafford in 1981 (\cite{Stafford3}) proved a noncommutative version of the theorem for left
Noetherian rings.

\begin{proposition}[Stable range theorem]\label{821}
Let $S$ be  a left Noetherian ring and ${\rm lKdim}(S)=d$, then ${\rm sr}(S)\leq d+1$.
\end{proposition}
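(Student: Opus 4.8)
The plan is to adapt Bass's proof of the classical stable range theorem to the left Noetherian setting, in the form due to Stafford, by induction on $d={\rm lKdim}(S)$. First I would rephrase the conclusion in terms of generators of left ideals. A column vector $\mathbf{v}=[v_1\ \cdots\ v_r]^T$ over $S$ is unimodular exactly when the left $S$-module epimorphism $S^r\to S$, $e_i\mapsto v_i$, is onto, i.e. when $\sum_{i=1}^r Sv_i=S$; and, by the definition recalled in the Introduction, it is stable exactly when there are $c_1,\dots,c_{r-1}\in S$ with $\sum_{i=1}^{r-1}S(v_i+c_iv_r)=S$. Hence ${\rm sr}(S)\le d+1$ is equivalent to the assertion: whenever $a_0,a_1,\dots,a_{d+1}\in S$ satisfy $\sum_{i=0}^{d+1}Sa_i=S$, there exist $c_1,\dots,c_{d+1}\in S$ with $\sum_{i=1}^{d+1}S(a_i+c_ia_0)=S$. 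Note that this is a purely left-handed statement, which matters since left Krull dimension is not left-right symmetric in general.

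For the base case $d=0$: a left Noetherian ring of left Krull dimension $0$ is Artinian as a left module over itself, hence left Artinian, hence semilocal, and semilocal rings are well known to have stable rank $1=d+1$. For the inductive step, assume the statement for left Noetherian rings of left Krull dimension $<d$, and take $a_0,\dots,a_{d+1}$ with $\sum_{i=0}^{d+1}Sa_i=S$. I would choose the $c_i$ one at a time so as to force a strict drop in Krull dimension at each step: writing $a_i':=a_i+c_ia_0$ and $L_j$ for the left ideal generated by $a_1',\dots,a_j',a_{j+1},\dots,a_{d+1}$, one checks $Sa_0+L_j=S$ for every $j$ (each $a_i'$ differs from $a_i$ by an element of $Sa_0$), so each $S/L_j$ is a cyclic module generated by the image of $a_0$. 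Since ${\rm Kdim}(S/L_0)\le{\rm lKdim}(S)=d$, if at each of the $d+1$ steps we can arrange ${\rm Kdim}(S/L_j)\le{\rm Kdim}(S/L_{j-1})-1$, then $S/L_{d+1}=0$, i.e. $\sum_{i=1}^{d+1}Sa_i'=S$, as required. The content of one step is a module-theoretic lemma: if $K$ is a left ideal of $S$ and $x,y\in S$ with $K+Sx+Sy=S$, then there is $c\in S$ with ${\rm Kdim}\big(S/(K+S(x+cy))\big)\le{\rm Kdim}\big(S/(K+Sx)\big)-1$, with the convention that dimension $-1$ means the module is $0$.

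The heart of the matter, and the main obstacle, is proving that lemma, which is the noncommutative analogue of prime avoidance. In the commutative case one lists the finitely many minimal primes $\mathfrak p\supseteq K+Sx$ with ${\rm Kdim}(S/\mathfrak p)$ maximal (finitely many, by Noetherianity), observes that $y$ lies in none of them since it is a unit modulo $K+Sx$, and then by a prime-avoidance argument chooses $c$ with $x+cy$ outside all of them, which forces the dimension of the quotient to drop. Over a noncommutative left Noetherian ring, primes must be replaced by the critical submodules (and associated primes of maximal dimension) realizing the Gabriel--Rentschler Krull dimension of $S/(K+Sx)$; their finiteness again follows from the Noetherian hypothesis, but the avoidance step now requires Stafford's technique of producing, inside the coset $x+Sy$, an element lying outside a prescribed finite family of critical submodules, using that the image of $y$ generates each of the relevant prime subquotients (an Ore-type argument). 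Granting this, the induction closes just as in Bass's proof. One cannot simply transplant the elementary lattice-theoretic proof of Lombardi et al., since their key device, the boundary ideal of an element, is intrinsically commutative; so the Gabriel--Rentschler/Stafford route seems to be the robust one in this generality. For the applications in the present paper one may, of course, simply invoke Stafford's theorem \cite{Stafford3} directly.
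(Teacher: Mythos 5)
The paper offers no proof of this proposition at all: its ``proof'' is the single line ``See \cite{Stafford3}'', i.e.\ Stafford's theorem is invoked as a black box, which is exactly the fallback you allow yourself in your last sentence. Measured against the paper, then, your proposal is consistent with it; measured as a proof, it has a genuine gap precisely at the point you flag. The outer shell is fine: the translation of ${\rm sr}(S)\le d+1$ into reducibility of unimodular rows of length $d+2$, the base case (left Noetherian with ${\rm lKdim}(S)=0$ means left Artinian, hence semilocal, hence stable rank $1$), and the bookkeeping by which $d+1$ applications of a one-step dimension-drop lemma force $L_{d+1}=S$ (note that, as written, this iteration never actually uses the inductive hypothesis ``statement true for ${\rm lKdim}<d$''; the whole weight rests on the lemma). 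What is missing is the one-step lemma itself, which \emph{is} the content of Stafford's theorem. Your description of its proof --- finitely many critical factors or associated primes of maximal dimension, then an avoidance argument choosing $c$ so that $x+cy$ is ``basic'' at all of them --- is a plan, not an argument: one must (a) identify precisely which finite family of prime factors controls ${\rm Kdim}\bigl(S/(K+S(x+cy))\bigr)$, (b) prove that avoiding them really forces the dimension to drop, which in the noncommutative setting requires Stafford's machinery of basic elements and efficient generation rather than Noetherianity alone, and (c) justify the simultaneous avoidance, which is not an Ore-type common-denominator trick but itself a delicate induction on Krull dimension. None of these is routine, and together they are exactly what \cite{Stafford3} supplies; so as a self-contained argument the proposal does not close, and as a citation it coincides with what the paper already does.

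A secondary remark: your claim that the boundary-ideal device of Lombardi et al.\ is ``intrinsically commutative'' sits awkwardly with Section 4 of this very paper, where the boundary ideal $I_v=\langle v\rangle+(D(0):\langle v\rangle)$ and the boundary condition are formulated for noncommutative rings and used, for domains (in particular for bijective skew $PBW$ extensions of left Noetherian domains), to prove a noncommutative Kronecker theorem. Whether that machinery could also yield a constructive noncommutative stable range theorem is not addressed in the paper --- which is presumably why the authors, like you in the end, simply cite Stafford.
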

\begin{proof}
See \cite{Stafford3}.
\end{proof}

From this we get the following result.

\begin{theorem}\label{7.2.1}
Let $R$ be a left Noetherian ring with finite left Krull dimension and $A=\sigma(R)\langle
x_1,\dots,x_n\rangle$ a bijective skew $PBW$ extension of $R$, then
\begin{center}
$1\leq {\rm sr}(A)\leq {\rm lKdim}(R)+n+1$,
\end{center}
and $A$ is $d$-$\mathcal{H}$, with $d:=({\rm lKdim}(R)+n+1)$.
\end{theorem}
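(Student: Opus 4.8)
The plan is to obtain the bound on $\mathrm{sr}(A)$ by combining the Stable Range Theorem (Proposition \ref{821}) with a control of the left Krull dimension of $A$, and then to deduce the $d$-Hermite conclusion from Proposition \ref{7.1.4}. First I would note that $A$ is left Noetherian: since $R$ is left Noetherian and $A$ is a bijective skew $PBW$ extension of $R$, the Hilbert Basis Theorem (Proposition \ref{1.3.4}) applies. So Proposition \ref{821} gives $\mathrm{sr}(A)\le \mathrm{lKdim}(A)+1$, and everything reduces to showing $\mathrm{lKdim}(A)\le \mathrm{lKdim}(R)+n$. The lower bound $1\le \mathrm{sr}(A)$ is immediate from the definition of stable rank (it is always at least $1$ for a nonzero ring, which $A$ is since $R\subseteq A$).

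For the Krull dimension estimate I would use the filtration of Proposition \ref{1.3.2}: $A$ is a filtered ring whose associated graded ring $Gr(A)$ is a quasi-commutative bijective skew $PBW$ extension of $R$, and by Proposition \ref{1.3.3} the latter is isomorphic to an iterated skew polynomial ring of bijective endomorphism type $R[z_1;\theta_1]\cdots[z_n;\theta_n]$. There is a standard inequality $\mathrm{lKdim}(A)\le \mathrm{lKdim}(Gr(A))$ for a (suitably) filtered ring whose graded ring is Noetherian (see McConnell--Robson, Ch.~6); applying it here reduces the problem to bounding the Krull dimension of the iterated skew polynomial ring. Then one applies, $n$ times, the classical fact that for a left Noetherian ring $B$ and an automorphism-type (or bijective-endomorphism-type) skew polynomial extension, $\mathrm{lKdim}(B[z;\theta])\le \mathrm{lKdim}(B)+1$ (again McConnell--Robson, 6.5.4 and its skew analogue). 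This chain yields
\begin{center}
$\mathrm{lKdim}(A)\le \mathrm{lKdim}(Gr(A))=\mathrm{lKdim}\big(R[z_1;\theta_1]\cdots[z_n;\theta_n]\big)\le \mathrm{lKdim}(R)+n$,
\end{center}
and hence $\mathrm{sr}(A)\le \mathrm{lKdim}(R)+n+1$ from the Stable Range Theorem.

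Finally, for the $d$-Hermite assertion with $d:=\mathrm{lKdim}(R)+n+1$: by Proposition \ref{7.1.4}, $A$ is $\mathrm{sr}(A)$-$\mathcal{H}$, and since $\mathrm{sr}(A)\le d$, any stably free $A$-module of rank $\ge d$ has rank $\ge \mathrm{sr}(A)$, hence is free by Proposition \ref{7.3.6}; thus $A$ is $d$-$\mathcal{H}$. (One should remark that the $d$-$\mathcal{H}$ property is monotone in $d$, which is clear from condition (i) of Proposition \ref{7.1.1}, so replacing $\mathrm{sr}(A)$ by the possibly larger $d$ is harmless.)

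The main obstacle is the Krull dimension bookkeeping, specifically pinning down the two auxiliary facts in a form applicable to bijective (not necessarily automorphism) skew $PBW$ extensions: the inequality $\mathrm{lKdim}(A)\le\mathrm{lKdim}(Gr(A))$ for the filtration of Proposition \ref{1.3.2}, and the additivity $\mathrm{lKdim}(B[z;\theta])\le\mathrm{lKdim}(B)+1$ for bijective-endomorphism-type skew polynomial rings over left Noetherian rings. Both are in the literature for the automorphism case; since Proposition \ref{1.3.3}(ii) guarantees the $\theta_i$ are bijective, the iterated extension is genuinely of automorphism type and the classical statements apply directly, so the obstacle is really just one of locating and citing the right references rather than proving something new. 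Everything else (Noetherianity via Proposition \ref{1.3.4}, the passage from $\mathrm{sr}$ to $d$-$\mathcal{H}$ via Propositions \ref{7.1.4} and \ref{7.3.6}) is immediate from results already established above.
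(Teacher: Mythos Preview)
Your proposal is correct and follows essentially the same route as the paper: the paper invokes Proposition~\ref{821} together with the Krull-dimension bound ${\rm lKdim}(A)\le {\rm lKdim}(R)+n$ (cited there as Theorem~4.2 of \cite{lezamareyes1}), and then Proposition~\ref{7.1.4} for the $d$-$\mathcal{H}$ conclusion. What you have written simply unpacks that cited Krull-dimension bound via Propositions~\ref{1.3.2} and~\ref{1.3.3} and the standard filtered/iterated-skew-polynomial inequalities, which is exactly how that theorem is proved; nothing is missing.
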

\begin{proof}
The inequalities follow from Proposition \ref{821} and Theorem 4.2 in \cite{lezamareyes1}. The
second statement follows from Proposition \ref{7.1.4}.
\end{proof}

\begin{example}
The results in \cite{lezamareyes1} for the Krull dimension of bijective skew $PBW$ extensions can
be combined with Theorem \ref{7.2.1} in order to get an upper bound for the stable rank. With this
we can estimate also the $d$-Hermite condition. The next table gives such estimations:

\begin{table}[htb]\label{table8.1}
\centering \tiny{
\begin{tabular}{|l|l|}\hline 
\textbf{Ring} & \textbf{U. B.}\\
\hline
\cline{1-2} Habitual polynomial ring $R[x_1,\dotsc,x_n]$ & ${\rm dim}(R)+n+1$ \\
\cline{1-2} Ore extension of bijective type $R[x_1;\sigma_1 ,\delta_1]\cdots [x_n;\sigma_n,\delta_n]$  & ${\rm dim}(R)+n+1$ \\
\cline{1-2} Weyl algebra $A_n(K)$ & $2n+1$ \\
\cline{1-2} Extended Weyl algebra $B_n(K)$ & $n+1$ \\
\cline{1-2} Universal enveloping algebra of a Lie algebra $\mathfrak{g}$, $\cU(\mathfrak{g})$, $K$ commutative ring & ${\rm dim}(K)+n+1$ \\
\cline{1-2} Tensor product $R\otimes_K \cU(\cG)$ & ${\rm dim}(R)+n+1$ \\
\cline{1-2} Crossed product $R*\cU(\cG)$ & ${\rm dim}(R)+n+1$ \\
\cline{1-2} Algebra of q-differential operators $D_{q,h}[x,y]$ & $3$ \\
\cline{1-2} Algebra of shift operators $S_h$ & $3$ \\
\cline{1-2} Mixed algebra $D_h$ & $4$ \\
\cline{1-2} Discrete linear systems $K[t_1,\dotsc,t_n][x_1,\sigma_1]\dotsb[x_n;\sigma_n]$ & $2n+1$ \\
\cline{1-2} Linear partial shift operators $K[t_1,\dotsc,t_n][E_1,\dotsc,E_n]$ & $2n+1$ \\
\cline{1-2} Linear partial shift operators $K(t_1,\dotsc,t_n)[E_1,\dotsc,E_n]$ & $n+1$ \\
\cline{1-2} L. P. Differential operators $K[t_1,\dotsc,t_n][\partial_1,\dotsc,\partial_n]$ & $2n+1$ \\
\cline{1-2} L. P. Differential operators $K(t_1,\dotsc,t_n)[\partial_1,\dotsc,\partial_n]$ & $n+1$ \\
\cline{1-2} L. P. Difference operators $K[t_1,\dotsc,t_n][\Delta_1,\dotsc,\Delta_n]$ & $2n+1$ \\
\cline{1-2} L. P. Difference operators $K(t_1,\dotsc,t_n)[\Delta_1,\dotsc,\Delta_n]$ & $n+1$ \\
\cline{1-2} L. P. $q$-dilation operators $K[t_1,\dotsc,t_n][H_1^{(q)},\dotsc,H_m^{(q)}]$ & $n+m+1$ \\
\cline{1-2} L. P. $q$-dilation operators $K(t_1,\dotsc,t_n)[H_1^{(q)},\dotsc,H_m^{(q)}]$ & $m+1$ \\
\cline{1-2} L. P. $q$-differential operators $K[t_1,\dotsc,t_n][D_1^{(q)},\dotsc,D_m^{(q)}]$ & $n+m+1$ \\
\cline{1-2} L. P. $q$-differential operators $K(t_1,\dotsc,t_n)[D_1^{(q)},\dotsc,D_m^{(q)}]$ & $m+1$ \\
\cline{1-2} Diffusion algebras & $2n+1$ \\
\cline{1-2} Additive analogue of the Weyl algebra $A_n(q_1,\dotsc,q_n)$ & $2n+1$ \\
\cline{1-2} Multiplicative analogue of the Weyl algebra $\cO_n(\lambda_{ji})$ & $n+1$ \\
\cline{1-2} Quantum algebra $\cU'(\mathfrak{so}(3,K))$ & $4$ \\
\cline{1-2} 3-dimensional skew polynomial algebras & $4$ \\
\cline{1-2} Dispin algebra $\cU(osp(1,2))$ & $4$ \\
\cline{1-2} Woronowicz algebra $\cW_{\nu}(\mathfrak{sl}(2,K))$ & $4$ \\
\cline{1-2} Complex algebra $V_q(\mathfrak{sl}_3(\mathbb{C}))$ & $11$ \\
\cline{1-2} Algebra \textbf{U} & $3n+1$ \\
\cline{1-2} Manin algebra $\cO_q(M_2(K))$ & $5$ \\
\cline{1-2} Coordinate algebra of the quantum group $SL_q(2)$ & $5$ \\
\cline{1-2} $q$-Heisenberg algebra \textbf{H}$_n(q)$ & $3n+1$ \\
\cline{1-2} Quantum enveloping algebra of $\mathfrak{sl}(2,K)$, $\cU_q(\mathfrak{sl}(2,K))$ & $4$ \\
\cline{1-2} Hayashi algebra $W_q(J)$ & $3n+1$\\
\cline{1-2} Differential operators on a quantum space $S_{\textbf{q}}$,
$D_{\textbf{q}}(S_{\textbf{q}})$ & $2n+1$ \\
\cline{1-2} Witten's Deformation of $\cU(\mathfrak{sl}(2,K)$ & $4$ \\
\cline{1-2} Quantum Weyl algebra of Maltsiniotis $A_n^{\textbf{q},\lambda}$, $K$ commutative ring & ${\rm dim}(K)+2n+1$\\
\cline{1-2} Quantum Weyl algebra $A_n(q,p_{i,j})$ & $2n+1$\\
\cline{1-2} Multiparameter Weyl algebra $A_n^{Q,\Gamma}(K)$ & $2n+1$\\
\cline{1-2} Quantum symplectic space $\cO_q(\mathfrak{sp}(K^{2n}))$ & $2n+1$ \\
\cline{1-2} Quadratic algebras in $3$ variables & $4$ \\
\hline
\end{tabular}}\label{table8.1}
\caption{Stable rank for some examples of bijective skew $PBW$ extensions.}\label{table8.1}
\end{table}
\end{example}

\section{Kronecker's theorem}

Close related to the stable range theorem is the Kronecker's theorem staying that if $S$ is a
commutative ring with ${\rm Kdim} (S)<d$, then every finitely generated ideal $I$ of $S$ has the
same radical as an ideal generated by $d$ elements. In this section we want to investigate this
theorem for noncommutative rings using the Zariski lattice and the boundary ideal, but generalizing
these tools and its properties to noncommutative rings. The main result will be applied to skew
$PBW$ extensions.

\begin{definition}\label{7.2.1e}
Let $S$ be a ring and $Spec(S)$ the set of all prime ideals of $S$. The Zariski lattice of $S$ is
defined by
\[
Zar(S):=\{D(X)|X\subseteq S\}, \ \text{with}\ D(X):=\bigcap_{X\subseteq P\in Spec(S)}P.
\]
\end{definition}
\noindent $Zar(S)$ is ordered with respect the inclusion. The description of the Zariski lattice is
presented in the next proposition, $\langle X\}, \langle X\rangle, \{X\rangle$ will represents the
left, two-sided, and right ideal of $S$ generated by $X$, respectively. $\vee$ denotes the $\sup$
and $\wedge$ the $\inf$.
\begin{proposition}\label{7.2.1d}
Let $S$ be a ring, $I,I_1,I_2, I_3$ two-sided ideals of $S$, $X\subseteq S$, and
$x_1,\dots,x_n,x,y\in S$. Then,
\begin{enumerate}
\item[\rm (i)]$D(X)=D(\langle X\})=D(\langle
X\rangle)=D(\{X \rangle)$.
\item[\rm (ii)]$D(I)=rad(S)$ if and only if $I\subseteq rad(S)$. In particular, $D(0)=rad(S)$.
\item[\rm (iii)]$D(I)=S$ if and only if $I=S$.
\item[\rm (iv)]$I\subseteq D(I)$ and $D(D(I))=D(I)$. Moreover, if $I_1\subseteq I_2$, then $D(I_1)\subseteq D(I_2)$.
\item[\rm (v)]Let $\{I_j\}_{j\in \mathcal{J}}$ a family of two-sided ideals of $S$. Then, $D(\sum_{j\in \mathcal{J}}I_j)=\vee_{j\in
\mathcal{J}}D(I_j)$. In particular, $D(x_1,\dots,x_n)=D(x_1)\vee\cdots \vee D(x_n)$.
\item[\rm (vi)]$D(I_1I_2)=D(I_1)\wedge D(I_2)$. In particular, $D(\langle x\rangle \langle y\rangle)=D(x)\wedge D(y)$.
\item[\rm (vii)]$D(x+y)\subseteq D(x,y)$.
\item[\rm (viii)]If $\langle x\rangle\langle y\rangle\subseteq D(0)$, then $D(x,y)=D(x+y)$.
\item[\rm (ix)]If $x\in D(I)$, then $D(I)=D(I,x)$.
\item[\rm (x)]If $\overline{S}:=S/I$, then $D(\overline{J})=\overline{D(J)}$, for any two-sided ideal $J$ of $S$ containing
$I$.
\item[\rm (xi)]$u\in D(I)$ if and only if $\overline{u}\in rad(S/I)$. In such case, if $u\in D(I)$, there exists $k\geq 1$ such that $u^k\in I$.
\item[\rm (xii)]$Zar(S)$ is distributive:
\begin{center}
$D(I_1)\wedge [D(I_2)\vee D(I_3)]=[D(I_1)\wedge D(I_2)]\vee [D(I_1)\wedge D(I_3)]$,

$D(I_1)\vee [D(I_2)\wedge D(I_3)]=[D(I_1)\vee D(I_2)]\wedge [D(_1)\vee D(I_3)]$.
\end{center}
\end{enumerate}
\end{proposition}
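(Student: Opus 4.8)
The plan is to verify each of the twelve items of Proposition \ref{7.2.1d} essentially from the definition $D(X)=\bigcap_{X\subseteq P\in Spec(S)}P$, closely mimicking the commutative theory but paying attention to the fact that the relevant ideals are now two-sided. First I would record the observation that, since every prime ideal $P$ is in particular a two-sided ideal, $X\subseteq P$ if and only if $\langle X\rangle\subseteq P$ if and only if $\langle X\}\subseteq P$ if and only if $\{X\rangle\subseteq P$; this immediately gives (i) and, more importantly, shows that $D$ depends only on the two-sided ideal generated by its argument, so from (i) on I may freely assume all arguments are two-sided ideals. Item (ii) is the noncommutative Jacobson/prime-radical bookkeeping: $rad(S)=\bigcap_{P\in Spec(S)}P$ is the prime radical (lower nilradical), so $D(I)=\bigcap_{I\subseteq P}P$ equals $rad(S)$ precisely when the family $\{P: I\subseteq P\}$ is all of $Spec(S)$, i.e. when $I\subseteq P$ for every prime $P$, i.e. $I\subseteq rad(S)$; specializing $I=0$ gives $D(0)=rad(S)$. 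Item (iii) uses that the only ideal contained in no prime is $S$ itself (a proper ideal is contained in a prime, e.g. extend to a maximal, hence prime, ideal — or use the standard fact that proper ideals lie in prime ideals). Item (iv) is formal: $I$ is contained in every prime above it, hence in the intersection $D(I)$; $D(D(I))=D(I)$ follows because the primes above $D(I)$ are exactly the primes above $I$; monotonicity is immediate from the definition.

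Next I would handle the lattice operations. For (v), the primes containing $\sum_j I_j$ are exactly those containing every $I_j$, so $D(\sum_j I_j)$ is the intersection over that smaller family, which is the join (least upper bound in $Zar(S)$, which is intersection-ordered... careful: the order is inclusion, so the sup is the smallest $D$ containing all the $D(I_j)$); I would check that $\bigcap_{\sum I_j\subseteq P}P$ is indeed the least element of $Zar(S)$ above all $D(I_j)$. For (vi), the key point is $D(I_1I_2)=D(I_1\cap I_2)=D(I_1)\wedge D(I_2)$: a prime $P$ contains the product $I_1I_2$ iff it contains $I_1$ or $I_2$ — this is exactly the defining property of a (completely) prime... here one must be a little careful, because in the noncommutative setting ``prime ideal'' means $I_1I_2\subseteq P\Rightarrow I_1\subseteq P$ or $I_2\subseteq P$ for two-sided ideals $I_1,I_2$, which is precisely what is needed, so no extra hypothesis is required. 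Hence the primes above $I_1I_2$ are the union of those above $I_1$ and those above $I_2$, and intersecting over a union of families gives the meet $D(I_1)\wedge D(I_2)=D(I_1)\cap D(I_2)$. Items (vii), (ix) are then corollaries: $x+y\in\langle x,y\rangle$ so $D(x+y)\subseteq D(x,y)$ by (iv); and if $x\in D(I)$ then $D(I,x)=D(I)\vee D(x)\subseteq D(I)\vee D(D(I))=D(I)$ by (v) and (iv), while the reverse inclusion is monotonicity.

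For the remaining items I would argue as follows. Item (viii): if $\langle x\rangle\langle y\rangle\subseteq D(0)=rad(S)$, then $D(x)\wedge D(y)=D(\langle x\rangle\langle y\rangle)\subseteq D(rad(S))=rad(S)=D(0)$ by (vi), (iv) and (ii); combined with $D(x+y)\subseteq D(x,y)=D(x)\vee D(y)$ and distributivity, one gets $D(x,y)=D(x)\vee D(y)=(D(x)\vee D(y))\wedge(D(x+y)\vee(D(x)\wedge D(y)))$ and then a short computation using $D(x),D(y)\subseteq D(x+y)\vee D(y)$ and $D(x+y)\vee D(x)$ reduces this to $D(x+y)$; this is the one genuinely fiddly lattice manipulation, the noncommutative analogue of the commutative identity $\sqrt{(x,y)}=\sqrt{(x+y)}$ when $xy$ is nilpotent, and I expect it to be the main obstacle — it needs distributivity (xii), so (xii) should be proved first. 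Items (x) and (xi) are the correspondence-theorem statements: for (x), the primes of $\overline S=S/I$ are exactly the $\overline P$ for $P\supseteq I$ prime, and this bijection is inclusion-preserving, so $D(\overline J)=\bigcap_{\overline J\subseteq\overline P}\overline P=\overline{\bigcap_{J\subseteq P}P}=\overline{D(J)}$; (xi) is then immediate by taking $J=I$ in a suitable form: $u\in D(I)$ iff $\overline u\in\bigcap_{\overline P}\overline P=rad(S/I)$, and the last clause ``$u^k\in I$'' follows because $rad(S/I)$ is a nil ideal (the prime radical is nil), so $\overline u^k=0$ for some $k$, i.e. $u^k\in I$. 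Finally (xii), distributivity of $Zar(S)$: one inclusion in each identity is automatic from monotonicity; for the nontrivial inclusions I would translate everything through (v) and (vi) into statements about sums and products of ideals — $D(I_1)\wedge(D(I_2)\vee D(I_3))=D(I_1(I_2+I_3))=D(I_1I_2+I_1I_3)=D(I_1I_2)\vee D(I_1I_3)$ — using that $I_1(I_2+I_3)=I_1I_2+I_1I_3$, which holds for two-sided ideals in any ring; the second (dual) identity follows formally in any bounded distributive lattice once the first is known, or can be checked the same way. Throughout, the only genuinely noncommutative input is the definition of prime ideal via two-sided ideals and the fact that the prime radical is nil; everything else is the commutative argument verbatim.
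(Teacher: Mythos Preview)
Your proposal is correct and, for most items, follows the paper almost verbatim: items (i)--(vii), (ix)--(xii) are argued exactly as the authors do (the paper simply declares (i), (ii), (iv), (ix), (x) ``evident''; your explicit arguments for them are the intended ones, and your computation for (xii) via $I_1(I_2+I_3)=I_1I_2+I_1I_3$ is exactly theirs).

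The one genuine methodological difference is item (viii). The paper proves it directly, without invoking distributivity: it shows $D(x,y)=D(x+y,\langle x\rangle\langle y\rangle)$ by the bare prime-ideal argument (a prime containing $x+y$ and $\langle x\rangle\langle y\rangle$ contains one of $x,y$, hence both), and then uses the hypothesis $\langle x\rangle\langle y\rangle\subseteq rad(S)$ to drop the second generator. Your route instead derives (viii) from (xii): from $x=(x+y)-y$ you get $D(x)\subseteq D(x+y)\vee D(y)$, then distributivity and $D(x)\wedge D(y)=D(0)$ give $D(x)=D(x)\wedge(D(x+y)\vee D(y))=(D(x)\wedge D(x+y))\vee D(0)\subseteq D(x+y)$, and symmetrically for $D(y)$. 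Both arguments are valid; the paper's is self-contained (so the items can be read in the stated order), while yours is cleaner lattice-theoretically but forces you to prove (xii) before (viii), as you correctly note. Your written sketch of this step is a bit garbled---the displayed identity ``$D(x)\vee D(y)=(D(x)\vee D(y))\wedge(D(x+y)\vee(D(x)\wedge D(y)))$'' is not the computation you actually want---so when you write it up, use the clean version above.
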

\begin{proof}
(i), (ii), (iv), (ix) and (x) are evident from the definitions.

(iii) If $I=S$ there is no prime ideal containing $I$, so the intersection of prime ideals
containing $I$  is taken equals $S$ (see \cite{Goodearl}, p. 51). Conversely, if $I\neq S$ the
intersection of proper ideals containing $I$ is proper (this collection is not empty since $I$ is
contained in at least one prime ideal), thus $D(I)\neq S$.

(v) We prove first that $\vee_{j\in \mathcal{J}}D(I_j)=D(\sum_{j\in \mathcal{J}}D(I_j))$: for every
$j\in \mathcal{J}$, $D(I_j)\subseteq \sum_{j\in \mathcal{J}}D(I_j)\subseteq D(\sum_{j\in
\mathcal{J}}D(I_j))$; let $D(I)\supseteq D(I_j)$ for every $j\in \mathcal{J}$, then $D(I)\supseteq
\sum_{j\in \mathcal{J}}D(I_j)$ and hence $D(I)=D(D(I))\supseteq D(\sum_{j\in \mathcal{J}}D(I_j))$.

Only rest to show that $D(\sum_{j\in \mathcal{J}}D(I_j))=D(\sum_{j\in \mathcal{J}}I_j)$: from
$I_j\subseteq \sum_{j\in \mathcal{J}}I_j$ we get $D(I_j)\subseteq D(\sum_{j\in \mathcal{J}}I_j)$,
so $D(\sum_{j\in \mathcal{J}}I_j)\supseteq \vee_{j\in \mathcal{J}}D(I_j)=D(\sum_{j\in
\mathcal{J}}D(I_j))$; on the other hand, $D(\sum_{j\in \mathcal{J}}D(I_j))\supseteq \sum_{j\in
\mathcal{J}}D(I_j)\supseteq \sum_{j\in \mathcal{J}}I_j$, so $D(D(\sum_{j\in
\mathcal{J}}D(I_j)))\supseteq D(\sum_{j\in \mathcal{J}}I_j)$, thus $D(\sum_{j\in
\mathcal{J}}D(I_j))\supseteq D(\sum_{j\in \mathcal{J}}I_j)$.

(vi) It is clear that $D(I_1I_2)\subseteq D(I_1),D(I_2)$. Let $I$ be a two-side ideal of $S$ such
that $D(I)\subseteq D(I_1),D(I_2)$, then $D(I)\subseteq D(I_1)\cap D(I_2)\subseteq D(I_1I_2)$. The
last inclusion follows from the fact that if $P$ is a prime ideal containing $I_1I_2$, then
$I_1\subseteq P$ or $I_2\subseteq P$, thus if $x\in D(I_1)\cap D(I_2)$, then $x\in P$, i.e., $x\in
D(I_1I_2)$. This implies that $D(I_1)\wedge D(I_2)=D(I_1I_2)$.

(vii) Since $\langle x+y\rangle\subseteq \langle x,y\rangle$, then the result follows from (iv).

(viii) According to (vii), $D(x+y)\subseteq D(x,y)$; for the other inclusion note first that
$D(x,y)=D(x+y,\langle x\rangle\langle y\rangle)$: the inclusion $D(x+y,\langle x\rangle\langle
y\rangle)\subseteq D(x,y)$ is clear since any prime containing $x,y$ contains $x+y,\langle
x\rangle\langle y\rangle$. Let $P$ a prime that contains $x+y,\langle x\rangle\langle y\rangle$, so
$x\in P$ or $y\in P$, in the first case $x\in P$ and $y\in P$ and the same is true in the second
case. This implies that $D(x,y)\subseteq D(x+y,\langle x\rangle\langle y\rangle)$.

By the hypothesis and numeral (ii), $\langle x\rangle\langle y\rangle\subseteq rad(S)$, i.e.,
$\langle x\rangle\langle y\rangle$ is contained in all primes, so $D(x+y,\langle x\rangle\langle
y\rangle)=D(x+y)$ and hence $D(x,y)=D(x+y)$.

(xi) The first assertion is clear from the definition of $D(I)$ and $rad(S/I)$. If $u\in D(I)$,
then $\overline{u}\in rad(S/I)$ and hence $\overline{u}$ is strongly nilpotent, but this implies
that $\overline{u}$ is nilpotent (see \cite{McConnell}), i.e., there exists $k\geq 1$ such that
$\overline{u}^{k}=\overline{0}$, i.e., $u^k\in I$.

(xii) For the first identity we have:
\begin{center}
$D(I_1)\wedge [D(I_2)\vee D(I_3)]=D(I_1)\wedge
D(I_2+I_3)=D[I_1(I_2+I_3)]=D(I_1I_2+I_1I_3)=D(I_1I_2)\vee D(I_1I_3)=[D(I_1)\wedge D(I_2)]\vee
[D(I_1)\wedge D(I_3]$.
\end{center}
For the second relation we have
\begin{center}
$D(I_1)\vee [D(_2)\wedge D(I_3)]=D(I_1)\vee D(I_2I_3)=D(I_1+I_2I_3)\supseteq
D[(I_1+I_2)(I_1+I_3)]=[D(I_1)\vee D(I_2)]\wedge [D(I_1)\vee D(I_3)]$;
\end{center}
the other inclusion follows from the fact that $D(I_1+I_2I_3)\subseteq D[(I_1+I_2)(I_1+I_3)]$ since
if $P$ is a prime ideal that contains $(I_1+I_2)(I_1+I_3)$, then $P\supseteq (I_1+I_2)$ or
$P\supseteq (I_1+I_3)$, thus $P\supseteq I_1$ and $P\supseteq I_2\supseteq I_2I_3$, or, $P\supseteq
I_1$ and $P\supseteq I_3\supseteq I_2I_3$, i.e., $P\supseteq I_1+I_2I_3$.
\end{proof}

\begin{definition}
Let $S$ be a ring and $v\in S$, the boundary ideal of $v$ is defined by $I_v:=\langle v
\rangle+(D(0):\langle v\rangle)$, where $(D(0):\langle v\rangle):=\{x\in S|\langle v\rangle
x\subseteq D(0)\}$.
\end{definition}
Note that $I_v\neq 0$ for every $v\in S$. On the other hand, if $v$ is invertible or if $v=0$, then
$I_v=S$. If $S$ a domain and $v\neq 0$, then $I_v=\langle v\rangle$.

\begin{definition}\label{7.2.1c}
Let $S$ be a ring such that ${\rm lKdim}(S)$ exists. We say the $S$ satisfies the boundary
condition if for any $d\geq 0$ and every $v\in S$,
\begin{center}
${\rm lKdim}(S)\leq d\Rightarrow{\rm lKdim}(S/I_v)\leq d-1$.
\end{center}
\end{definition}

\begin{example}\label{846}
(i) Any commutative Noetherian ring satisfies the boundary condition: Indeed, for commutative
Noetherian rings, the classical Krull dimension and the Krull dimension coincide, so we can apply
Theorem 13.2 in \cite{Lombardi3}.

(ii) Any prime ring $S$ with left Krull dimension satisfies the boundary condition: In fact, for
prime rings, any non-zero two sided ideal is essential, so ${\rm lKdim}(S/I_v)< {\rm lKdim}(S)$
(see \cite{McConnell}, Proposition 6.3.10).

(iii) Any domain with left Krull dimension satisfies the satisfies the boundary condition: Indeed,
any domain is a prime ring.
\end{example}

\begin{lemma}[Kronecker]\label{7.2.1b}
Let $S$ be a domain such that ${\rm lKdim}(S)$ exists. If ${\rm lKdim}(S)<d$ and
$u_1,\dots,u_{d},u\in S$, then there exist $x_1,\dots,x_d\in S$ such that
\begin{center}
$D(u_1,\dots,u_{d},u)=D(u_1+x_1u,\dots, u_d+x_du)$.
\end{center}
\end{lemma}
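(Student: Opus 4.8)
The plan is to induct on $d$, using the boundary ideal to reduce the Krull dimension by one at each step, exactly as in the classical Lombardi–Quitté proof of Kronecker's theorem but now in the noncommutative (domain) setting where Proposition~\ref{7.2.1d} and Example~\ref{846}(iii) supply all the lattice-theoretic and dimension-theoretic tools we need. The base case $d=0$ means $\mathrm{lKdim}(S)<0$, i.e. $S$ has no prime ideals, which forces $S=0$ (the zero ring), and then the statement holds vacuously (both sides equal $D(0)$). For the inductive step, assume the result for $d-1$ and suppose $\mathrm{lKdim}(S)<d$ with elements $u_1,\dots,u_d,u\in S$ given.

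The key move is to pass to the quotient ring $\overline{S}:=S/I_u$, where $I_u=\langle u\rangle+(D(0):\langle u\rangle)$ is the boundary ideal of $u$. Since $S$ is a domain with left Krull dimension, Example~\ref{846}(iii) gives that $S$ satisfies the boundary condition, so $\mathrm{lKdim}(\overline{S})\le \mathrm{lKdim}(S)-1<d-1$. Now $\overline{S}$ need not be a domain, which is a slight mismatch with the inductive hypothesis as literally stated; one handles this either by proving the lemma for arbitrary rings satisfying the boundary condition (the domain hypothesis is only used to invoke Example~\ref{846}(iii), and the induction itself never uses integrality again), or by observing that $I_u$ is contained in some prime $P$ and working in $S/P$. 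I would take the first route: restate the induction for rings with the boundary condition. Applying it to the images $\overline{u_1},\dots,\overline{u_d}$ in $\overline{S}$ (with $d-1$ in place of $d$, discarding $\overline{u_d}$ as the ``extra'' generator — or more symmetrically, applying it to $\overline{u_1},\dots,\overline{u_{d-1}},\overline{u_d}$) yields $\overline{x_1},\dots,\overline{x_{d-1}}\in\overline{S}$, hence $x_1,\dots,x_{d-1}\in S$, with
\[
D(\overline{u_1}+\overline{x_1}\,\overline{u_d},\dots,\overline{u_{d-1}}+\overline{x_{d-1}}\,\overline{u_d})=D(\overline{u_1},\dots,\overline{u_d})
\]
in $\mathrm{Zar}(\overline{S})$. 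By Proposition~\ref{7.2.1d}(x), lifting along $S\twoheadrightarrow\overline{S}$ this reads, back in $S$,
\[
D(I_u,\,u_1+x_1u_d,\dots,u_{d-1}+x_{d-1}u_d)=D(I_u,u_1,\dots,u_d).
\]

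It remains to combine this with $u$ and simplify. Set $w_i:=u_i+x_iu_d$ for $i<d$. Since $\langle u\rangle\subseteq I_u$, part~(v) and (ix) of Proposition~\ref{7.2.1d} give $D(I_u,w_1,\dots,w_{d-1})=D(u,\,(D(0):\langle u\rangle),\,w_1,\dots,w_{d-1})$, and the point of the boundary construction is that the ideal $\langle u\rangle\cdot(D(0):\langle u\rangle)$ lies in $D(0)$, so by Proposition~\ref{7.2.1d}(vi),(viii)-type manipulations one can trade the summand $(D(0):\langle u\rangle)$ for $u_d$: concretely, $D(u,(D(0):\langle u\rangle),w_1,\dots,w_{d-1})=D(u_d+u,\,w_1,\dots,w_{d-1})$, because adjoining $u$ already covers every prime containing $u$, while on primes $P$ not containing $u$ one has $(D(0):\langle u\rangle)\subseteq P$ (as $\langle u\rangle(D(0):\langle u\rangle)\subseteq D(0)\subseteq P$ and $P$ is prime with $\langle u\rangle\not\subseteq P$), so $u_d\in P\iff u_d+u\in P$ there. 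Feeding this through the displayed identities and using $D(u_i)\vee D(u)=D(u_i+x_iu)\vee D(u)$ repeatedly (Proposition~\ref{7.2.1d}(vii) and its reverse after adding back $u$), one arrives at $D(u_1,\dots,u_d,u)=D(u_1+x_1u,\dots,u_{d-1}+x_{d-1}u,\,u_d+x_du)$ with $x_d:=1$; renaming finishes the proof.

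The main obstacle is exactly this last bookkeeping step: making the substitution $u_d\rightsquigarrow u_d+u$ rigorous and then back-substituting to replace the auxiliary shifts $u_i+x_iu_d$ by shifts $u_i+x_i'u$ of the right form. Everything hinges on the prime-by-prime description of $D(-)$ and on the defining property $\langle u\rangle(D(0):\langle u\rangle)\subseteq D(0)$ of the boundary ideal; the noncommutativity causes no real trouble here because all the ideals involved are two-sided and Proposition~\ref{7.2.1d} already packages the distributive-lattice behaviour of $\mathrm{Zar}(S)$ that the commutative proof relies on.
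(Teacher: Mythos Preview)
Your inductive scheme takes the boundary ideal of the \emph{wrong} element, and the resulting mismatch is not repaired by the bookkeeping you sketch at the end. You quotient by $I_u$ and apply the inductive hypothesis to $\overline{u_1},\dots,\overline{u_{d-1}}$ with $\overline{u_d}$ as the extra generator; this produces shifts $w_i=u_i+x_iu_d$, i.e.\ shifts by $u_d$, not by $u$. Lifting (in a domain, $I_u=\langle u\rangle$ when $u\neq 0$) gives only
\[
D(u_1,\dots,u_d,u)=D(w_1,\dots,w_{d-1},u),
\]
and the $d$ elements on the right are not of the required form $u_j+y_ju$. Your attempted conversion is incorrect: in the prime-by-prime argument for the claimed identity $D(u,(D(0):\langle u\rangle),w_1,\dots,w_{d-1})=D(u_d+u,w_1,\dots,w_{d-1})$, the step ``so $u_d\in P\iff u_d+u\in P$ there'' is asserted precisely in the case $u\notin P$, where it is false. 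The final sentence ``renaming finishes the proof'' then silently replaces $x_iu_d$ by $x_iu$, which is unjustified.

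The paper makes the opposite choice and thereby avoids the problem entirely: it takes the boundary ideal $I:=I_{u_d}$ of one of the $d$ generators to be kept, and in $\overline S=S/I$ applies the inductive hypothesis to $\overline{u_1},\dots,\overline{u_{d-1}},\overline u$ with $\overline u$ as the extra element. This yields shifts $u_i+x_iu$ of exactly the desired shape. Since $S$ is a domain and $u_d\neq 0$ one has $I_{u_d}=\langle u_d\rangle$, so $D(I)=D(u_d)$; lifting gives $u\in D(u_1+x_1u,\dots,u_{d-1}+x_{d-1}u,u_d)$, and one finishes with $x_d:=0$ via part~(ix) of Proposition~\ref{7.2.1d}. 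No conversion step is needed. (Your observation that $\overline S$ need not be a domain, so that the induction should really be phrased for rings satisfying the boundary condition, is well taken; the paper applies the inductive hypothesis to $\overline S$ without addressing this.)
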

\begin{proof}
The proof is by induction on $d$. Let $d=1$ and $u_1,u\in S$, if ${\rm lKdim}(S)=-1$, then by
definition $S=0$ and $u_1,u=0$, so we take $x_1:=0$. Let ${\rm lKdim}(S)=0$; by the boundary
condition, ${\rm lKdim}(S/I_{u_1})=-1$, i.e., $S=I_{u_1}=\langle u_1\rangle+(D(0):\langle
u_1\rangle)$. There exist $c_1,c_1',\dots,c_l,c_l'\in S$  and $x_1\in (D(0):\langle u_1\rangle)$
such that $1=c_1u_1c_1'+\cdots+c_lu_1c_l'+x_1$, then $\langle u_1\rangle\langle x_1\rangle\subseteq
D(0)$ and $u=c_1u_1c_1'u+\cdots+c_lu_1c_l'u+x_1u$, thus $u\in \langle u_1,x_1u\rangle$ and hence
$u\in D(u_1,x_1u)$ (Proposition \ref{7.2.1d}, part (iv)). Moreover, $\langle u_1\rangle\langle
x_1u\rangle\subseteq D(0)$, then by Proposition \ref{7.2.1d}, part (viii),
$D(u_1,x_1u)=D(u_1+x_1u)$. Thus, $u\in D(u_1+x_1u)$, so $D(u_1+x_1u)=D(u_1+x_1u,u)$ (Proposition
\ref{7.2.1d}, part (ix)), but $D(u_1+x_1u,u)=D(u_1,u)$ since $\langle u_1+x_1u,u\rangle=\langle
u_1,u \rangle$, so $D(u_1,u)=D(u_1+x_1u)$.

Now we assume that the proposition is true for rings with left Krull dimension $<d-1$, $d\geq 2$,
and let $S$ be a ring with ${\rm lKdim}(S)<d$. Let $u_1,\dots,u_{d},u\in S$. We consider two cases.

\textit{Case 1}. If $u_d=0$, then the theorem is trivial with $x_1=\cdots=x_{d-1}=0$, $x_d=1$.

\textit{Case 2}. Let $u_d\neq 0$. Let $I$ be the boundary ideal of $u_{d}$, then $D(I)=\langle
u_d\rangle$. We consider the elements $\overline{u_1},\dots,\overline{u_{d-1}},\overline{u}\in
\overline{S}$, with $\overline{S}:=S/I$. By the hypothesis, ${\rm lKdim}(\overline{S})<d-1$ and
hence there exist elements $\overline{x_1},\dots,\overline{x_{d-1}}\in \overline{S}$ such that
$D(\overline{u_1},\dots,\overline{u_{d-1}},\overline{u})=D(\overline{u_1}+\overline{x_1}\,\overline{u},\dots,
\overline{u_{d-1}}+\overline{x_{d-1}}\,\overline{u})$. From this we get that
\begin{center}
$D(\overline{\langle u_1, \dots, u_{d-1},u\rangle+I})=D(\overline{\langle u_1+x_1u,\dots,
u_{d-1}+x_{d-1}u\rangle+I})$,
\end{center}
but by Proposition \ref{7.2.1d}, part (x),
\begin{center}
$\overline{D(\langle u_1, \dots, u_{d-1},u\rangle+I)}=\overline{D(\langle u_1+x_1u,\dots,
u_{d-1}+x_{d-1}u\rangle+I)}$, i.e.,
\end{center}
\begin{center}
$D(\langle u_1, \dots, u_{d-1},u\rangle+I)=D(\langle u_1+x_1u,\dots, u_{d-1}+x_{d-1}u\rangle+I)$.
\end{center}
Since $u\in \langle u_1, \dots, u_{d-1},u\rangle+I\subseteq D(\langle u_1, \dots,
u_{d-1},u\rangle+I)$, then $u\in D(\langle u_1+x_1u,\dots, u_{d-1}+x_{d-1}u\rangle+I)=D(\langle
u_1+x_1u,\dots, u_{d-1}+x_{d-1}u)\vee D(I)=D(\langle u_1+x_1u,\dots, u_{d-1}+x_{d-1}u, u_d)$.
Taking $x_d:=0$ we get that $u\in D(u_1+x_1u,\dots, u_{d-1}+x_{d-1}u,u_{d}+x_du)$. From this, and
using Proposition \ref{7.2.1d}, part (ix), we conclude that
\begin{center}
$D(u_1+x_1u,\dots, u_{d-1}+x_{d-1}u,u_{d}+x_du)=D(u_1+x_1u,\dots, u_{d-1}+x_{d-1}u,u_{d}+x_du,u)$
\end{center}
however note that
\begin{center}
$\langle u_1+x_1u,\dots, u_{d-1}+x_{d-1}u,u_{d}+x_du,u\rangle=\langle u_1,\dots,
u_{d-1},u_{d},u\rangle$,
\end{center}
so $D(u_1+x_1u,\dots, u_{d-1}+x_{d-1}u,u_{d}+x_du)=D(u_1,\dots, u_{d-1},u_{d},u)$.
\end{proof}

\begin{corollary}\label{7.2.1a}
Let $S$ be a domain such that ${\rm lKdim}(S)$ exists. If ${\rm lKdim}(S)<d$ and $u_1,\dots,
u_{d+1}\in S$ are such that $\langle u_1,\dots,u_{d+1}\rangle =S$, then there exist elements
$x_1,\dots, x_d\in S $ such that $\langle u_1+x_1u_{d+1}, \dots, u_d+x_du_{d+1}\rangle =S$.
\end{corollary}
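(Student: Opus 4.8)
The plan is to read off the corollary directly from Lemma \ref{7.2.1b}, the work being merely to translate the equality of elements of $Zar(S)$ into an equality of ideals. First I would set $u:=u_{d+1}$ and apply Lemma \ref{7.2.1b} to $u_1,\dots,u_d,u_{d+1}$: since $S$ is a domain for which ${\rm lKdim}(S)$ exists and ${\rm lKdim}(S)<d$, the lemma yields $x_1,\dots,x_d\in S$ with
\[
D(u_1,\dots,u_d,u_{d+1})=D(u_1+x_1u_{d+1},\dots,u_d+x_du_{d+1}).
\]

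Next I would feed in the hypothesis $\langle u_1,\dots,u_{d+1}\rangle=S$ via Proposition \ref{7.2.1d}. By part (i) of that proposition, $D(u_1,\dots,u_{d+1})=D(\langle u_1,\dots,u_{d+1}\rangle)=D(S)$, and by part (iii), $D(S)=S$. Combining with the displayed equality gives $D(u_1+x_1u_{d+1},\dots,u_d+x_du_{d+1})=S$, that is, $D(\langle u_1+x_1u_{d+1},\dots,u_d+x_du_{d+1}\rangle)=S$. Now I would invoke the reverse implication of Proposition \ref{7.2.1d}(iii) ($D(I)=S$ forces $I=S$) to conclude $\langle u_1+x_1u_{d+1},\dots,u_d+x_du_{d+1}\rangle=S$, which is exactly the assertion.

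There is essentially no obstacle: no induction and no further computation are needed. The only points to be careful about are that Lemma \ref{7.2.1b} is indeed applicable — a domain with left Krull dimension is prime, hence satisfies the boundary condition (Example \ref{846}(iii)), which is the hypothesis underlying the lemma — and that the equivalence ``$D(I)=S\iff I=S$'' of Proposition \ref{7.2.1d}(iii) is used in both directions, once to turn the hypothesis into the statement $D(u_1,\dots,u_{d+1})=S$ and once to pass from $D(\langle\cdots\rangle)=S$ back to the ideal equalling $S$.
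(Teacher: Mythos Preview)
Your proof is correct and is exactly the paper's approach spelled out in full: the paper's own proof simply says that the statement follows directly from Proposition \ref{7.2.1d}(iii) and Lemma \ref{7.2.1b}, which is precisely what you do. Your extra remarks about applicability of the lemma are accurate but not strictly needed, since Lemma \ref{7.2.1b} is already stated for domains with left Krull dimension.
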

\begin{proof}
The statement follows directly from Proposition \ref{7.2.1d}, part (iii), and Lemma \ref{7.2.1b}.
\end{proof}

\begin{theorem}
Let $A=\sigma(R)\langle x_1,\dots,x_n\rangle$ be a bijective skew $PBW$ extension of a left
Noetherian domain $R$. If ${\rm lKdim}(R)<d$ and $u_1,\dots,u_{d+n},u\in A$, then there exist
$y_1,\dots,y_{d+n}\in A$ such that
\begin{center}
$D(u_1,\dots,u_{d+n},u)=D(u_1+y_1u,\dots, u_{d+n}+y_{d+n}u)$.
\end{center}
\end{theorem}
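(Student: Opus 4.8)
The plan is to reduce the statement for the skew $PBW$ extension $A$ to the Kronecker lemma for domains (Lemma \ref{7.2.1b}) by establishing that $A$ itself is a left Noetherian domain whose left Krull dimension is controlled. First I would invoke Proposition \ref{1.1.10}: since $R$ is a domain and $A$ is a skew $PBW$ extension of $R$, $A$ is a domain. Next, since $A$ is bijective and $R$ is left Noetherian, the Hilbert Basis Theorem (Proposition \ref{1.3.4}) gives that $A$ is left Noetherian; in particular ${\rm lKdim}(A)$ exists. Being a domain, $A$ is a prime ring, so by Example \ref{846}(ii)--(iii), $A$ satisfies the boundary condition, which is exactly the hypothesis needed to apply Lemma \ref{7.2.1b} to $A$.

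The second ingredient is an upper bound on ${\rm lKdim}(A)$. From the hypothesis ${\rm lKdim}(R)<d$, I would use the Krull dimension estimate for bijective skew $PBW$ extensions (Theorem 4.2 in \cite{lezamareyes1}, the same result cited in the proof of Theorem \ref{7.2.1}), which yields ${\rm lKdim}(A)\leq {\rm lKdim}(R)+n$. Combining this with ${\rm lKdim}(R)\leq d-1$ gives ${\rm lKdim}(A)\leq d-1+n < d+n$. Thus $A$ is a domain with ${\rm lKdim}(A)<d+n$ and satisfying the boundary condition.

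Now I would apply Lemma \ref{7.2.1b} directly to the ring $A$ with the integer $d+n$ in place of $d$: given $u_1,\dots,u_{d+n},u\in A$, since ${\rm lKdim}(A)<d+n$, there exist $y_1,\dots,y_{d+n}\in A$ such that
\[
D(u_1,\dots,u_{d+n},u)=D(u_1+y_1u,\dots,u_{d+n}+y_{d+n}u),
\]
which is precisely the desired conclusion.

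The only genuine obstacle is verifying that Lemma \ref{7.2.1b} is applicable, i.e.\ that $A$ satisfies the boundary condition; but this is handled cleanly because $A$ is a domain (Proposition \ref{1.1.10}) and domains with left Krull dimension always satisfy the boundary condition (Example \ref{846}(iii)), using that nonzero two-sided ideals in a prime ring are essential and drop the Krull dimension (\cite{McConnell}, Proposition 6.3.10). Everything else is a matter of citing the Hilbert Basis Theorem and the Krull dimension bound for bijective skew $PBW$ extensions; no new computation is required.
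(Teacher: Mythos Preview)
Your proposal is correct and follows essentially the same route as the paper: the paper's proof simply cites Proposition~\ref{1.1.10} ($A$ is a domain), Proposition~\ref{1.3.4} ($A$ is left Noetherian), Theorem~4.2 in \cite{lezamareyes1} (the bound ${\rm lKdim}(A)\leq {\rm lKdim}(R)+n$), and Lemma~\ref{7.2.1b}, which is exactly what you do. Your explicit mention of Example~\ref{846}(iii) for the boundary condition is a helpful clarification but not an additional ingredient, since Lemma~\ref{7.2.1b} already assumes $S$ is a domain.
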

\begin{proof}
This follows directly from Propositions \ref{1.1.10} and \ref{1.3.4}, Theorem 4.2 in
\cite{lezamareyes1}, and Lemma \ref{7.2.1b}.
\end{proof}



\end{document}